\newtheorem{thm}{Theorem}[section]
\newtheorem{theorem}[thm]{Theorem}
\newtheorem{lemma}[thm]{Lemma}
\newtheorem{proposition}[thm]{Proposition}
\newtheorem{corollary}[thm]{Corollary}
\theoremstyle{definition}
\newtheorem{remark}[thm]{Remark}
\newtheorem{example}[thm]{Example}
\newtheorem{thevarthm}[thm]{\varthmname}
\newenvironment{varthm*}[1]{\trivlist\item[]{\bf #1.}\it}{\endtrivlist}
\renewcommand\ge{\geqslant}
\renewcommand\geq{\geqslant}
\renewcommand\leq{\leqslant}
\newcommand\be{\begin{eqnarray*}}
\newcommand\ee{\end{eqnarray*}}
\newcommand\C{\mathbb C}
\newcommand\Z{\mathbb Z}
\newcommand\E{\mathbb E}
\renewcommand\P{\mathbb P}
\newcommand\T{\mathbb T}
\newcommand\G{\mathbb G}
\newcommand\calc{{\mathcal C}}
\newcommand\tcalc{\widetilde{\calc}}
\newcommand\calo{{\mathcal O}}
\newcommand\call{{\mathcal L}}
\newcommand\calm{{\mathcal M}}
\newcommand\calt{{\mathcal T}}
\newcommand\alphahat{\widehat{\alpha}}
\newcommand\newop[2]{\def#1{\mathop{\rm #2}\nolimits}}
\newop\log{log}
\newop\ord{ord}
\newop\Gal{Gal}
\newop\SL{SL}
\newop\Bl{Bl}
\newop\mult{mult}
\newop\mass{mass}
\newop\div{div}
\newop\codim{codim}
\newop\sing{sing}
\newop\vdim{vdim}
\newop\edim{edim}
\newop\Ass{Ass}
\newop\size{size}
\newop\virdim{dim_{vir}}
\newop\expdim{dim_{exp}}
\newcommand\eqnref[1]{(\ref{#1})}
\newcommand\restr[1]{\big|_{#1}}
\newcommand\beginproof[1]{\trivlist\item[\hskip\labelsep{\em #1.}]}
\renewcommand\proof{\beginproof{Proof}}
\newcommand\proofof[1]{\beginproof{Proof of #1}}
\def\endproof{\hspace*{\fill}\endproofsymbol\endtrivlist}
\def\endproofsymbol{\frame{\rule[0pt]{0pt}{6pt}\rule[0pt]{6pt}{0pt}}}
\def\keywordname{{\bfseries Keywords}}%
\def\keywords#1{\par\addvspace\medskipamount{\rightskip=0pt plus1cm
\def\and{\ifhmode\unskip\nobreak\fi\ $\cdot$
}\noindent\keywordname\enspace\ignorespaces#1\par}}
\def\subclassname{{\bfseries Mathematics Subject Classification
(2000)}\enspace}
\def\subclass#1{\par\addvspace\medskipamount{\rightskip=0pt plus1cm
\def\and{\ifhmode\unskip\nobreak\fi\ $\cdot$
}\noindent\subclassname\ignorespaces#1\par}}
\begin{document}

\author{M.~Dumnicki, B.~Harbourne, J.~Ro\'e, T.~Szemberg, H.~Tutaj-Gasi\'nska}
\title{Unexpected surfaces singular on lines in $\P^3$}


\maketitle

\begin{abstract}
   We study linear systems of surfaces
   in $\P^3$ singular along general lines.
  Our purpose is to identify and classify special systems of such surfaces, i.e.,
  those nonempty systems where the
  conditions imposed by the multiple lines are not independent.
  We prove the existence of four surfaces arising as (projective) linear systems
  with a single reduced member, which numerical experiments had suggested must exist.
  These are \emph{unexpected} surfaces and we expect that our list is complete,
  i.e. it contains all special linear systems of affine dimension $1$, whose projectivisation has one, reduced and irreducible member.

    As an application we find upper bounds for Waldschmidt constants
   along certain sets of general lines.
\end{abstract}
  
  \noindent 
{\bf{Keywords:}} fat flats, special linear systems, unexpected varieties, base loci, Cremona transformations
\subclass{14C20; 14E05}


\section{Introduction}
   The study of linear systems of hypersurfaces in complex projective spaces
   with assigned base points of given multiplicity is a classical and
   central problem in algebraic geometry; see, e.g. \cite{AleHir95}, \cite{Iar97},
   \cite{CilMir98}, \cite{Har10}.

   In the last few years this problem has been generalized to linear systems of hypersurfaces
   with assigned base loci consisting of linear subspaces of higher dimension \cite{GHVT13,DHST14}.
   Conjectures such as \cite[Conjecture 5.5]{GHVT13} and \cite[Conjectures A, B and C]{DHST14}
   suggest that the asymptotic behavior of such linear systems in the case of
   linear subspaces of higher dimension is
   similar to the case of points.
   In particular, after fixing sufficiently many sufficiently general linear
   subspaces, it is expected that the conditions imposed on forms by vanishing along these subspaces
   will be independent. (For efficiency we slightly abuse terminology
   by saying that $r$ homogeneous linear equations in a vector
   space of dimension $s$ are \emph{independent}
   if the subspace of solutions has dimension either $s-r$ or $0$.)
   On the other hand,
   it is interesting and important to understand \emph{special} linear systems,
   i.e., nonempty systems for which the imposed conditions are dependent, or,
   equivalently, nonempty linear systems whose dimension is greater than
   expected from a naive conditions count.

 In the classical setup of assigned base points in the case of generic points in $\P^2$, the well-known
   Segre-Harbourne-Gimigliano-Hirschowitz Conjecture (SHGH for short) \cite{Seg61,Har84,Gi87,Hirsh89},
   provides a complete (yet still conjectural) explanation of all
   special linear systems of plane curves; see \cite{Cil00} for
   a nice survey and \cite{CHMR13} for an account on recent progress.
   In the case of $\P^3$, there is an analogous conjecture due to
   Laface and Ugaglia \cite{LafUga05, LafUga06}.
   In addition, there is
   a very nice partial result due to Brambilla,
   Dumitrescu and Postinghel \cite{BDP14} valid for points
   in projective spaces of arbitrary dimension.

Recently a new path of research has been opened in
\cite{CHMN16} and \cite{HMNT18}, where the authors introduce the notion
of unexpected hypersurfaces; see also \cite{Szp18} for interesting connections with Lefschetz Properties and
hypo-osculating varieties.
With respect to the notation introduced in the next section,
the hypersurfaces in $\call=\call_d(m_1,\dots,m_s)$ being unexpected
just means $\call$ is special (defined below).
	
   In the present note we focus
   on the classification of special linear systems with base loci
   assigned along positive dimensional linear subspaces in the
   simplest nontrivial situation, i.e., we consider linear systems of surfaces
   in $\P^3$ with vanishing conditions imposed along general lines.
   Our main result is Theorem \ref{ex:Singular list}.

\section{Notation and basic properties}
   In our context of imposed base \emph{lines}, we will use
   the same notation customarily used for
   linear systems with imposed base \emph{points}. This has
   the advantage of working with familiar notation but with no danger
   of confusion since we clearly flag appearances of assigned base points.

   Thus, $\call=\call_d(m_1,\dots,m_s)$ denotes the linear system of surfaces
   of degree $d$ in $\P^3$ passing through $s$ general lines (hence the lines are
   in particular disjoint) with assigned multiplicities $m_1,\dots,m_s$.
   If $d<m_i$ for some $i$, then clearly $\call_d(m_1,\dots,m_s)=\varnothing$, so we will
   always assume that $d\geq \max(m_1,\ldots,m_s)$.
   As is customary, if the multiplicities are repeated, then we abbreviate the notation in a
   natural way. For example $\call_d(m^{\times s})$ denotes a linear
   system of surfaces of degree $d$ with $s$ lines of the same multiplicity $m$.

  Let $c_{m,d}$ be the number of conditions
   which vanishing to order $m$ along a line in $\P^3$
   imposes on forms of degree $d\geq m$. This number is well-known and is worked out in \cite[Lemma A.2(c)]{DHST14}:
   $$c_{m,d}=\sum_{i=0}^{m-1}(d+1-i)(i+1)=\frac{1}{6}m(m+1)(3d+5-2m).$$
   For the convenience of the reader we provide explicit formulas for a few initial values of $m$:
   $$c_{1,d}=d+1,\;\; c_{2,d}=3d+1,\;\; c_{3,d}=6d-2,\;\; c_{4,d}=10d-10,\;\; c_{5,d}=15d-25.$$
   The \emph{virtual} dimension of $\call$ is therefore
   \begin{equation}\label{eq:virtdim}
   \virdim(\call)=\binom{d+3}{3}-\frac16\sum_{i=1}^{s}m_i(m_i+1)(3d+5-2m_i).
   \end{equation}
   Note that we use here \emph{affine} dimension; i.e., the dimension of
   the vector space of forms defining the surfaces in the linear system.
   As usual, the \emph{expected} dimension of $\call$ is
   $$\expdim(\call)=\max\left\{\virdim(\call),0\right\}.$$
   If the actual dimension of $\call$ is larger than the expected
   dimension, then we say that $\call$ is \emph{special}. In this note we are
   primarily interested in special systems.

\section{Reasons for speciality}
   The conditions imposed by general lines with multiplicity $1$ are
   always independent. In other words, linear systems of the type
   $$\call_d(1^{\times s})$$
   are always non-special. While general assigned base points of multiplicity 1
   trivially impose independent conditions, in the case of base lines of multiplicity 1
   this is a non-trivial result due to Hartshorne and Hirschowitz \cite{HarHir81}.

   Thus as with points it requires lines of higher multiplicities in order to get a special system.
   There are two easy ways to construct such special linear systems.
   We discuss them in the following two examples.

\begin{example}[Multiples of non-special systems]\label{ex:multiple}
   Let $\call=\call_2(1,1,1)$. Then, by the Hartshorne and
   Hirschowitz result, $\call$ is non-special with $\dim(\call)=1$, so the (projective) linear
   system $\call$ contains a unique quadric $Q$. On the other hand, for
   $\calm=2\call=\call_4(2^{\times 3})$ we have
   $$\virdim(\calm)=-4, \mbox{ hence } \expdim(\calm)=0$$
   but of course $\dim(\calm)\geq 1$ since $2Q$ is in $\calm$, and it is not hard to verify that $\dim(\calm)=1$
   so $2Q$ is the only member of the projectivisation of $\calm$.
   In fact, all linear systems $\call_{2m}(m^{\times 3})$ with $m\ge2$ are
   special of affine dimension 1.

   Another instance of this is given by the linear system $\call_{3}(1^{\times 4})$
   of cubics containing 4 general lines. By considering
   quadrics through 3 of the 4 lines with a plane containing the fourth
   we see that $\call_{3}(1^{\times 4})$ is nonempty.

   Since $\call_{3m}(m^{\times 4})$
   has negative virtual dimension if (and only if) $m\ge 8$,
   yet is a multiple of the nonempty system
   $\call_{3}(1^{\times 4})$,
   it is nonempty and hence special for $m\geq8$.
   Note for 4 general lines that there
   are two lines which meet each of the 4 transversally. Thus both of these
   transversal lines are
   in the base locus of $\call_{3}(1^{\times 4})$, and by considering
   quadrics through 3 of the 4 lines with a plane containing the fourth,
   it is not hard to see that the base locus is precisely the two transversals
   (see \cite[Example 3.4.3]{D11}). Since the base locus of $\call_{3}(1^{\times 4})$ has no divisorial
   components, the general member of $\call_{3m}(m^{\times 4})$ is reduced.
   In particular, these give special linear systems whose general members
   are reduced. This is in contrast to $\call_{2m}(m^{\times 3})$, and to the assertion of the SHGH Conjecture for special
   linear systems of curves in the plane (which says that every special system has
   a multiple curve in its base locus).
\end{example}
   Instead of taking multiples of a fixed system, one may add some distinct systems.
\begin{example}[Unions of non-special systems]\label{ex:union}
   Let $\call=\call_8(3^{\times 4})$. Then
   $$\virdim(\call)=-19, \mbox{ hence } \expdim(\call)=0$$
   but this system is non-empty, as it contains the element consisting of the union of four quadrics
   (hence it is reduced), each of which vanishes
   along three of the four given general lines. In fact, it can be easily checked
   that this is the only element of the system.
\end{example}

   In contrast to the previous examples, it is possible for a special system
  to be reduced, irreducible and of affine dimension one.
   An extensive computer search using Singular \cite{Singular} exhibited four such special linear systems, listed in Theorem \ref{ex:Singular list}.
   Since in each case the  projectivisation of the system contains a unique, irreducible member,
   the speciality cannot be explained along the lines of Examples
   \ref{ex:multiple} and \ref{ex:union}.
   We will see in section \ref{sec:Waldschmidt} how the existence of systems B) and C) is related
   to some important asymptotic invariants of homogeneous ideals.

\begin{theorem}\label{ex:Singular list}
   The following systems are special of (affine) dimension $1$:
   \begin{itemize}
   \item[A)] $\call_{10}(3^{\times 4},1^{\times 5})$;
   \item[B)] $\call_{12}(4,3^{\times 5})$;
   \item[C)] $\call_{12}(3^{\times 6},2)$;
   \item[D)] $\call_{20}(6^{\times 5},1)$.
   \end{itemize}
   Thus there is a single surface of the given degree
   vanishing to given order along the given number of \emph{general} lines.
\end{theorem}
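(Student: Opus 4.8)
The plan is to treat the four systems uniformly. First I would record that in each case the virtual dimension is non-positive: using the values of $c_{m,d}$ listed above one computes
$$\virdim(\call_{10}(3^{\times 4},1^{\times 5})) = \binom{13}{3} - (4c_{3,10}+5c_{1,10}) = 286-287 = -1,$$
and in the same way $\virdim$ equals $-5$, $-2$ and $-105$ for B), C) and D). Hence $\expdim=0$ in all four cases, and the theorem becomes the single assertion that each system has \emph{actual} affine dimension exactly $1$. This splits into an upper bound $\dim\le 1$ (projective uniqueness) and a lower bound $\dim\ge 1$ (nonemptiness), both for \emph{general} lines.

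For the upper bound I would argue by an effective rank computation. Writing $N=\binom{d+3}{3}$ for the number of coefficients and $C=\sum_i c_{m_i,d}$ for the number of imposed conditions, the linear system is the kernel of an explicit $C\times N$ matrix $M$ whose entries are polynomials, with integer coefficients, in the coordinates of the chosen lines, and $\dim\le 1$ is equivalent to $\operatorname{rank} M \ge N-1$ for general lines. I would therefore fix a concrete configuration of lines defined over a small finite field $\mathbb F_p$, form $M$ there, and exhibit an $(N-1)\times(N-1)$ minor that is nonzero modulo $p$. Since that minor is a polynomial with integer coefficients, its nonvanishing mod $p$ shows it is not identically zero, hence nonzero at a general point in characteristic $0$; thus the generic rank over $\C$ is at least $N-1$ and $\dim\le 1$ for general complex lines. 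This is the step genuinely suited to the Singular computation referred to in the introduction.

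The lower bound---nonemptiness for general lines---is the real content, and here semicontinuity is of no help, since $h^0$ is upper semicontinuous and specialising the lines can only \emph{increase} the dimension. I would instead construct a surface in each system explicitly, exploiting the Cremona transformations flagged in the keywords. Concretely, I would look for a (cubic) Cremona transformation of $\P^3$, adapted to the configuration, carrying each of the four systems to a visibly nonempty one---for instance one realised by a union of planes and quadrics through subsets of the lines, in the spirit of Examples \ref{ex:multiple} and \ref{ex:union}---and then track how the degree and the multiplicities along the base lines transform, so that pulling back yields an honest member of A), B), C) or D). The delicate point is to do this for \emph{general} lines rather than a special symmetric configuration: one must check that the base lines of the transformation can be placed generically relative to the given lines and that no multiplicity is lost in the transform.

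The main obstacle I expect is precisely this existence step. The upper bound reduces to a finite linear-algebra certificate and is essentially mechanical once a good prime and configuration are chosen; but producing the extra, ``unexpected'' section for general lines requires the right geometric model, and verifying that the constructed surface attains the prescribed multiplicity along every line---and keeps the stated degree after the Cremona pull-back---is where care is needed. Should no clean Cremona model present itself for one of the systems, the fallback would be to exhibit the generic drop in rank directly, by producing a universal linear dependence among the rows of $M$ valid for all line configurations; but such a syzygy is harder to find than an explicit surface.
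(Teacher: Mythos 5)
Your overall decomposition is sound: the virtual dimension counts are right, the split into an upper bound (uniqueness) and a lower bound (existence) is exactly the right one, and you correctly identify the asymmetry of semicontinuity --- a computation or degeneration at a special configuration can certify $\dim\le 1$ for general lines but can never certify nonemptiness. Your finite-field rank certificate for the upper bound is a legitimate alternative to what the paper does (the paper reaches the same bound geometrically, by degenerating lines so that some of them meet, invoking Lemma \ref{lem:limit-fat-lines}, and analyzing restrictions to quadrics, planes and a cubic surface), and your Cremona strategy for existence is precisely how the paper handles three of the four cases: the cubo-cubic map based at the four triple lines sends $6H-\E$ to $10H-3\E$ for case A) (with an extra conditions count for the five simple lines, which lie outside the Cremona base and must be handled separately), and the Todd transformation handles B) (the duodecic is the image of an exceptional divisor $E_i$) and D).

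The genuine gap is case C), $\call_{12}(3^{\times 6},2)$, and it is not a gap your fallback can close. Both transformations at your disposal fix this system: applying $\Gamma_{\calc}$ (based at four of the triple lines) or $\Gamma_{\calt}$ (based at the six triple lines) to $12H-3\E$ returns $12H-3\E$, and the seventh, double line is not part of either base locus, so no birational transformation of the ambient space simplifies the system --- the paper says explicitly that it does not see how to produce C) this way. The construction that actually works is of a different nature: blow up the six triple lines to get $X$, map $X\to\P^4$ by the anticanonical system $M=4H-\E$ (quartics through the six lines), whose image is a quartic threefold $Y$; the seventh line maps to a rational normal quartic curve $C\subset\P^4$, and the secant variety of $C$ is a cubic hypersurface $T$ singular along $C$; the pullback of $T\cap Y\in|\calo_{\P^4}(3)|_Y|$ is then a degree $3\cdot 4=12$ surface, triple along the six lines and double along the seventh. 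Without an idea of this kind, your proposal stalls on C): your suggested fallback --- exhibiting a universal syzygy among the rows of the conditions matrix valid for all line configurations --- is exactly as hard as proving existence and comes with no method of attack, as you yourself concede.
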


The systems above are special as long as they are effective.
   Note that the Singular computation which led to the systems
   in Theorem \ref{ex:Singular list} works with \emph{random}
   rather than \emph{general} lines, and so it gives strong evidence, but
   not proof, of the effectivity of the given systems.
   This Theorem is proved in subsection \ref{sec:first part} and in section
   \ref{sec:7 lines}.

   \begin{remark}
   In cases A), B) and D) of Theorem \ref{ex:Singular list} the unexpected surfaces are rational.
   However in case C) the unexpected surface is of general type by Corollary \ref{cor:general-type}.
   This is quite surprising in view of the special effect varieties program proposed by Bocci \cite{Boc05}.
   \end{remark}

\section{Cremona transformations of $\P^3$}\label{sec:Cremona}
   Cremona transformations have long played a prominent role in the
   study of special linear systems in the plane. It turns out
   that this is also a useful approach in our situation.
   We recall here two birational transformations of $\P^3$
   which best suit our needs. We provide some background for non-experts.

\subsection{A cubo-cubic transformation}
   First, there is the cubo-cubic transformation $\calc:\P^3\dashrightarrow\P^3$
   induced by the linear system of cubics vanishing to order $1$ along $4$ general lines,
   for which both the image and inverse image of a general plane is a cubic surface (hence the name
   cubo-cubic). This compares to the more familiar
   plane quadratic Cremona transformation induced by the linear system of conics vanishing
   to order $1$ at $3$ general points (which by analogy is called a quadro-quadric
   transformation).

  %

      \begin{figure}[h]
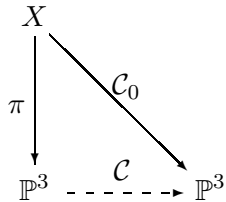

   \begin{diagram}
      X &  & \\                                   %
      \dTo^{\pi} & \rdTo\;\raise.1in\hbox{$\calc_0$} & \\			
      \P^3 & \rDashto^{\calc} & \P^3\\			
   \end{diagram}
   \caption{The cubo-cubic transformation.}
   \label{diagram1}
   \end{figure}

   As mentioned in Example \ref{ex:multiple}, the base locus of the linear system of cubic surfaces vanishing on
   4 general lines consists of 6 lines: the 4 general lines,
   and two additional lines which are transversal to the first 4; i.e., both tranversal lines intersect each of the 4 general lines
   (see \cite[Example 3.4.3]{D11}).

       Let $\pi:X\to\P^3$ be the morphism obtained by first blowing up $\P^3$ along each of four general lines $l_1,\ldots,l_4$
   (whose exceptional divisors we denote by $E_1,\ldots,E_4$) and then blowing up
   the two transversal lines $t_1,t_2$ (whose exceptional divisors we denote by $T_1$ and $T_2$).
   One can show that $\pi$ resolves the indeterminacy of $\calc$, giving
      a morphism $\calc_0$, as shown in Figure \ref{diagram1}.

   Denote by $H$ the pullback via $\pi$ to $X$ of a plane in $\P^3$.
    By $\E$ we denote $E_1+\dots +E_4$ and by $\T$ we denote $T_1+T_2$.
   Thus $\calc_0$ is induced by the linear system of sections of $3H-\E-\T$.
   By $H'$ we denote the pullback   of a plane via $ \calc_0$, so $H'=3H-\E-\T$.

     The intersection product on $X$ is determined by
   \begin{equation}\label{eq:intersection product}
   H^3 =1, H E_i^2=H T_j^2=E_iT_j^2=-1 \mbox{ and }  -E_i^3 =T_j^3=2
   \end{equation}
   with all other monomial triple intersections being 0.
   As  $H^3=1$ and $H^2E_i=H^2T_i=0$ are clear,  we briefly explain the other intersections.
   Note that the blowup of a line in $\P^3$ is isomorphic to $\P^1\times\P^1$, thus sections of $H-E_i$ are disjoint so $H(H-E_i)^2=0$.
   Expanding this shows that $HE_i^2=-1$. Similarly expanding $(H-E_i)^3=0$ we get $E_i^3=-2$.

   Computing $T_j^3$ is more subtle.
   Consider $(H-T_1)^2T_1$. We will show that this intersection equals $4$,
   then expanding, we get that $T_1^3=2$.

   Each element of $|H-T_1|$ is the proper transform of a plane $h\subset \P^3$ containing $t_1$.
   The proper transform of $h$ is the blowup of $h$ in the four points $p_1,\ldots, p_4$ where the lines $l_i$ meet $h$.
   So if $A,B$ are different elements of $|H-T_1|$, then the restrictions $A\stackrel{\pi}{\to} a$, $B\stackrel{\pi}{\to} b$
   of $\pi$ to $A$ and $B$
   are the blowups of $p_1,\ldots, p_4$ on the planes $a$ and $b$. Thus, $A\cap B=e_{p_1}\cup\ldots \cup e_{p_4}$,
   where $e_{p_i}$ is the exceptional curve given by the blowup of $p_i$.
   Since $\#(e_{p_i}\cap T_1)=1$ and the intersection is transversal, we get $(H-T_1)^2T_1=4$.


   The morphism $\calc_0:X\to\P^3$ factorizes into an isomorphism of $X$ followed by a sequence of blowups of $\P^3$, as we now explain.
   There are four quadrics, say $q_1,\ldots,q_4$, where  $q_1$ is the unique quadric which passes through the lines
    $l_2,l_3,l_4$,  $q_2$ is the unique quadric which passes through $l_1,l_3,l_4$, etc. The proper transform by $\pi$ of $q_i$ is $Q_i$.
    The divisor class of $Q_i$ is $2H-\E+E_i-\T $. The image of $Q_i$ under $\calc_0$ is
    a line in $\P^3$, call it $l_i'$ (as the restriction of $3H-\E-\T$ to $Q_i$ is a $(1,0)$ class).
   The images of $T_1 $ and $T_2$ are lines $t_1',t_2'$ transversal to $l_1',\ldots,l_4'$. We note that $\pi(T_i)$ is  projection
   along one ruling of $T_i=\P^1\times\P^1$ and $\calc_0(T_i)$ is  projection along the other  ruling.
   In Figure \ref{diagram1b}, let $\pi_1':X_1'\to\P^3$ be the blowup of the lines $l_i'$ and let $\pi_2':X_2'\to X_1'$
   be the blowup of the proper transforms of the lines $t_i'$. By the universal property of blowups,
   ${\calc}_0$ lifts to ${\calc}_1$, and then ${\calc}_1$ lifts to ${\calc}_2$.

   We claim ${\calc}_2$ is an isomorphism. Because $H^2H'=3$, the
   inverse of $\calc$ is defined by a four dimensional family of cubic surfaces. Since the fibers over $l_i', t_j'$ are positive dimensional,
   the inverse $\calc'$ of $\calc$ is not defined on these six lines, so the base locus of $\calc'$ contains these six lines,
   and, as for ${\calc}$, this is the whole base locus and a morphism $\calc_0':X_2'\to\P^3$ resolving the indeterminacies of ${\calc}'$
   is obtained by blowing them up, $l_1',\ldots,l_4'$ first, and then $t_1', t_2'$. Arguing as before, $\calc_0'$ lifts to a morphism
   ${\calc}_2'$, and since ${\calc}$ and ${\calc}'$ are inverse birational maps, we see that ${\calc}_2$ and ${\calc}_2'$ are inverse morphisms.

\begin{figure}[h]
\begin{center}
   \begin{tikzcd}[column sep=huge]
      X \arrow[dd,"\pi"] \arrow[ddr,"{\calc}_0"] \arrow[dr,"{\calc}_1"] \arrow[r,"{\calc}_2"] & X_2' \arrow[d,"\pi_2'"]\\
   & X_1' \arrow[d,"\pi_1'"]\\	
      \P^3 \arrow[r,dashed,"{\calc}"] & \P^3\\		
   \end{tikzcd}
   \end{center}
   \caption{A factorization of the morphism ${\calc}_2'$ as blowups of the $l_i'$ and the $t_i'$.}
   \label{diagram1b}
\end{figure}

 The following transformation rules
 \begin{equation}\label{eq:trafo rules for cubo cubic}
   \begin{array}{ccl}
      H' & \mapsto & 3H -\E-\T,\\
      E_i' & \mapsto & 2H -\E+E_i-\T,	\\		
      T_i' & \mapsto & T_i
   \end{array}
   \end{equation}
   define a linear transformation $\Gamma_{\calc}: Cl(X_2')\to Cl(X)$ on the divisor class group of $X_2'$ (i.e., the free $\Z$-module generated by $H', E_1',\ldots,E_4',T_1',T_2'$).
   This transformation preserves all triple products and its matrix is its own inverse.
This is the map on the divisor class groups induced by pullback by the birational map $\calc$ in Figure \ref{diagram1b}.

\begin{remark}
The divisors $T_i$ play a role in defining $\calc$ but, as explained below in Remark \ref{SpecialByCremona},
we will obtain special systems by applying $\Gamma_{\calc}$ to divisor classes in which the $T_i$ do not occur as summands.
\end{remark}

\subsection{Todd transformation}
   There is another interesting transformation
   $\calt: \P^3\dashrightarrow \P^3$ which seems to go back
   to Todd \cite[Introduction]{Tod32}. It is given by the
   linear system of surfaces of degree $19$ vanishing to order $5$
   along $6$ general lines $l_1,\ldots,l_6$ in $\P^3$. We now summarize Todd's results.

    The geometry of this Cremona map can be analyzed similarly to the cubo-cubic case. The base locus of the
   linear system of surfaces of degree $19$ vanishing to order $5$ along six general lines consists of the six lines,
   the 30 transversal lines which are determined by subsets of four out of the six lines, and, as explained in \cite{Wake} the six twisted cubics that have the six lines
   as chords.
   Let $\pi:X\to \P^3$ be the composition of the blowing up of the source $\P^3$ in the set of base lines,
   followed by the blowing up in the additional 30 lines and then the 6 cubics.
   We denote the exceptional divisors by $E_1,\ldots,E_6$, $T_1,\ldots,T_{30}$,
   $G_1,\ldots,G_6$. We abbreviate as above $\E=E_1+\dots+E_6$, $\T=T_1+\dots+T_{30}$, $\G=G_1+\dots+G_6$.
   Then
   $\calt$ induces a birational morphism $\calt_0:X\to\P^3$ determined		%
   by the linear system $19H-5\E-\T-3\mathbb{G}$.  %
  We have the  commutative diagram \ref{diagram2}.
   \begin{figure}[h]
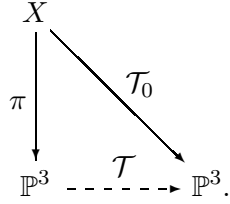

   	\begin{diagram}
   		X &  & \\
   		\dTo^{\pi} & \rdTo^{\calt_0} & \\
   		\P^3 & \rDashto^{\calt} & \P^3.
   	\end{diagram}
   	\caption{The Todd transformation.}
   	\label{diagram2}
   \end{figure}

   The inverse map is again given by
   surfaces of degree $19$ vanishing along a set of $6$  lines, $l_1',\ldots,l_6'$.
   Todd shows \cite[p. 59]{Tod32} that each base line $l_i$ is transformed into $D_i$, a (rational) surface   of degree 12.  Denote by $t:D_i\to l_i\cong\P^1$
   the restriction of the inverse map. Todd proves that the fibers of $t$ are quintic curves which meet
   five of the lines $l_1'\ldots l_6'$ in three points and the remaining line in four points. The map $t$ is in fact a morphism,
   and $D_i$ has multiplicity 3 along five of the lines $l_1'\ldots l_6'$ and multiplicity 4 along the sixth.
 Thus, we obtain that for each $i=1,\ldots,6$ there is a unique surface $D_i\subset \P^3$
   of degree $12$ vanishing along $5$ general lines to order $3$ and along the sixth general line
   to order $4$.

   Similarly to \eqnref{eq:intersection product}, and using the same notation, the intersection product on $X$ satisfies
     \begin{equation}\label{eq:intersection product2}
   H^3 =1, H E_i^2=-1 \mbox{ and }  E_i^3 =-2
   \end{equation}
   with all other monomial triple intersections of $E_i$ and $H$ being 0.


Using an argument similar to the one used above, the Todd transformation corresponds to a linear transformation $\Gamma_{\calt}$ with:
   \begin{equation}\label{eq:trafo rules for Todd}
   \begin{array}{ccl}
      E_i' & \mapsto & 12H -3\E -E_i,\\
      H' & \mapsto & 19H -5\E,
   \end{array}
   \end{equation}
   which preserves triple products (\ref{eq:intersection product2}) and which is self inverse.

\subsection{Limits of fat disjoint lines}

The Cremona transformations above will be used to show nonemptiness of the systems in Theorem \ref{ex:Singular list}.
To show uniqueness of each surface, we shall rely on semicontinuity proving uniqueness in some particular position of the lines.
In some cases the chosen special position involves some of the lines becoming coplanar -- so they intersect; in that case the limit linear system acquires a higher multiplicity at the point of intersection of these lines:

 \begin{lemma}\label{lem:limit-fat-lines}
 	Let $l$ be a fixed line and let $r_t, t\in \Delta\subset \C$ be an analytic family of lines in $\P^3$ where $\Delta$ is a disk around 0, such that $l$ and $r_t$ are skew for $t\neq 0$ and the lines $l$ and  $r_0$ intersect at a point $p$. Let $m,n$ be positive integers.
 	If $F\in \C[[t]][X,Y,Z,W]$ is the equation of an analytic family of surfaces, such that for every $t\in \Delta\setminus 0$, $F_t$ has multiplicity at least $m$ (respectively $n$) along $l$ (respectively $r_t$), then $F_0$ has multiplicity at least $m$ (respectively $n$) along $l$ (respectively $r_0$), and multiplicity at least $m+n$ at $p$.
 \end{lemma}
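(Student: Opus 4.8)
The plan is to treat the three conclusions separately: the two multiplicity statements about the fixed datum along $l$ and $r_0$ I would get by semicontinuity, while the estimate $\mult_p(F_0)\ge m+n$ I would obtain by a conservation-of-intersection-numbers argument applied to a carefully chosen analytic family of secant lines. First I would record that ``vanishing to order $\ge m$ along a fixed line $l$'' is a closed condition cut out by the vanishing of finitely many of the coefficients of a form; these coefficients are convergent power series in $t$ which vanish for every $t\ne 0$, hence vanish at $t=0$, so $F_0$ has multiplicity $\ge m$ along $l$. For $r_0$ the line moves, but the defining conditions depend analytically on both the coefficients of $F_t$ and on the coefficients describing the moving line $r_t$, and so again persist in the limit; this gives multiplicity $\ge n$ along $r_0$.

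For the point estimate I would work in an affine chart with $p=(0,0,0)$, $l=\{y=z=0\}$, $r_0=\{x=z=0\}$, so that the plane spanned by the two limiting lines is $\Pi=\{z=0\}$, and write the moving line as $r_t=\{x=a(t)+b(t)y,\ z=c(t)+e(t)y\}$ with $a,b,c,e$ analytic and vanishing at $0$; skewness of $l$ and $r_t$ for $t\ne0$ is exactly the condition $c(t)\ne0$ for $t\ne0$. The key reduction is that $\mult_p(F_0)=i(F_0,M;p)$ for a \emph{general} line $M$ through $p$, so it suffices to produce, for a dense set of directions, a line $M$ through $p$ with $i(F_0,M;p)\ge m+n$. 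I would do this as a limit: for parameters $v_1,v_2\in\C$ set $\beta(t)=v_2c(t)$ and take the feet $q_t=(a+b\beta,\ \beta,\ c+e\beta)\in r_t$ and $p_t=(a+(bv_2-v_1)c,\ 0,\ 0)\in l$. Both are analytic in $t$, tend to $p$, and are distinct for $t\ne0$ (their $z$-coordinates are $c(1+ev_2)$ and $0$), so the secant $M_t=\overline{p_tq_t}$ is an analytic family of lines whose direction tends to $(v_1,v_2,1)$.

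Restricting, $f_t(s)=F_t|_{M_t}(s)$ is analytic in $(s,t)$ and vanishes to order $\ge m$ at the parameter of $p_t$ (since $p_t\in l$) and to order $\ge n$ at the parameter of $q_t$ (since $q_t\in r_t$); hence $f_t$ has at least $m+n$ zeros, counted with multiplicity, near the parameter value of $p$. Letting $t\to0$ and using continuity of zeros (Hurwitz, or a Weierstrass preparation in the two variables $s,t$, the case $F_0|_{M_0}\equiv0$ being trivial), these zeros cannot escape a small disc, so $f_0=F_0|_{M_0}$ vanishes to order $\ge m+n$ at $p$, i.e.\ $i(F_0,M_0;p)\ge m+n$. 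As $(v_1,v_2)$ vary, the directions $(v_1,v_2,1)$ sweep a dense open set, so this holds for a general line through $p$, giving $\mult_p(F_0)\ge m+n$.

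The step I expect to be the real obstacle is precisely the \emph{generality} of the limiting line $M_0$. The naive algebra is not enough: from the first two parts one only gets $F_0\in(y,z)^m\cap(x,z)^n$, and this intersection contains $z^{\max(m,n)}$, so it does not force $\mult_p\ge m+n$. Likewise, the ``obvious'' common transversals to the two skew lines, taken near the collision, limit into the plane $\Pi=\langle l,r_0\rangle$ and therefore only bound $\mult_p(F_0|_\Pi)$, which gives merely an \emph{upper} bound for $\mult_p(F_0)$ and is useless. The resolution, and the point of the construction above, is to let the feet $p_t,q_t$ approach $p$ at exactly the rate $c(t)$ at which $r_t$ separates from $l$ in the $z$-direction (this is why $\beta(t)=v_2c(t)$): balancing the vanishing vertical separation against the shrinking length of the secant lets the direction of $M_t$ acquire a nonzero $z$-component and thus escape $\Pi$, attaining a general direction. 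This is exactly where the hypotheses $c(0)=0$ and $c(t)\ne0$ for $t\ne0$ are both used.
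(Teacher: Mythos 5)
Your proposal is correct, but it takes a genuinely different route from the paper's own proof. The paper argues by pure local algebra: working in the regular local ring $R=\C[[t]][x,y,z]_p$ of the \emph{family}, it notes that $R/I_l^m$ and $R/I_{r_t}^n$ are Cohen--Macaulay with $\dim R/I_l^m+\dim R/I_{r_t}^n=\dim R$, invokes Serre's Tor-vanishing criterion to get $\operatorname{Tor}^R_1(R/I_l^m,R/I_{r_t}^n)=0$, deduces $I_l^m\cap I_{r_t}^n=I_l^m\cdot I_{r_t}^n$ as ideals over $\C[[t]]$, and then specializes: $F_0\in I_l^m\cdot I_{r_0}^n\subset \frakm_p^{m+n}$. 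Your proof replaces this homological input with a geometric degeneration of secant lines whose feet approach $p$ at exactly the rate $c(t)$ at which the lines separate, so that the limiting secants sweep out a dense (Zariski-open) set of directions through $p$, followed by Hurwitz/continuity of zeros on the restrictions $f_t(s)=F_t|_{M_t}$; your direction computation is right, since $q_t-p_t=c(t)\,(v_1,\,v_2,\,1+e(t)v_2)$, and the final step works because the set of directions you attain and the set of directions computing $\mult_p(F_0)$ are both Zariski-open dense, hence meet. Notably, both arguments hinge on the same obstruction you flag: naive specialization only gives $F_0\in(y,z)^m\cap(x,z)^n\ni z^{\max(m,n)}$, which is precisely why the paper proves intersection equals product for the family over $\C[[t]]$ rather than at $t=0$. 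As for what each approach buys: the paper's proof is shorter, works verbatim in the formal category, and generalizes immediately to higher-dimensional linear flats (only Cohen--Macaulayness and the dimension count are used); yours is elementary (no homological algebra), makes the mechanism by which the multiplicities add geometrically transparent, and yields the extra statement that $F_0$ meets a \emph{general} line through $p$ with multiplicity $\geq m+n$ --- at the price of genuinely using the convergent/analytic setting in the Hurwitz step, though even that can be made algebraic by dividing $f(s,t)$ by the monic polynomial $s^m(s-c(t))^n$ in $\C\{t\}[s]$ and observing that the remainder vanishes identically.
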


 \begin{proof}
 	Without loss of generality we may assume that $l$ is the line $X=Y=0$ and $r_t$ is $Y-tW=Z=0$, so $p=[0:0:0:1]$.
 	The only claim that needs proof is the multiplicity at $p$, which is a local issue, so we will work in the regular local ring $R=\C[[t]][x,y,z]_p$, where $x=X/W, y=Y/W, z=Z/W$ and the lines are given by the ideals $I_l=(x,y)$ and $I_{r_t}=(y-t,z)$.
 	Then it is easy to see that $R/I_l^m$ and $R/I_{r_t}^n$ are Cohen-Macaulay modules, and $\dim R/I_l^m+\dim R/I_{r_t}^n=4=\dim R$.
 	Hence by \cite[V.B.6, Corollary after Theorem 4]{Ser}
 	one has $\operatorname{Tor}^R_1(R/I_l^m, R/I_{r_t}^n)=0$.
 	Therefore, tensoring the exact sequence $0\rightarrow I_l^m\rightarrow R\rightarrow R/I_l^m\rightarrow0$ with $R/I_{r_t}^n$ we obtain the exact sequence
 	$$0 \rightarrow I_l^m/(I_l^m\cdot I_{r_t}^n) \rightarrow R/I_{r_t}^n \rightarrow R/(I_l^m+I_{r_t}^n)\rightarrow0.$$
 	Then injectivity of the first map gives
 	$$I_l^m\cap I_{r_t}^n=I_l^m\cdot I_{r_t}^n$$
 	in $R$. Any $F$ as in the claim of course belongs to $I_l^m\cap I_{r_t}^n$, and it follows that it belongs to $I_l^m\cdot I_{r_t}^n$. In particular,
 	$F_0\in I_l^m\cdot I_{r_0}^n$ has multiplicity at least $m+n$ at $p$.
 	%
 \end{proof}

\subsection{First part of the proof of main theorem}\label{sec:first part}
   We are now in position to prove part of Theorem \ref{ex:Singular list}.
\proofof{Theorem \ref{ex:Singular list}, part I}
   We see immediately from \eqnref{eq:trafo rules for Todd}
   that the duodecic vanishing to order $3$ along $5$ general
   lines and to order $4$ along the sixth line
   is the image under $\calt_0$ of the exceptional divisor $E_i$
   of $\pi$. In particular it is a unique and irreducible member
   of the (projective) linear system
   in Theorem \ref{ex:Singular list} B). Moreover surfaces of this
   kind are rational.
   We will show in section \ref{sec:Waldschmidt} that these surfaces allow us
   to extend the list of known Waldschmidt constants of configurations
   of general lines in $\P^3$ obtained in \cite[Proposition B.2.1]{DHST14}.

   The Todd transformation explains also the linear system
   in Theorem \ref{ex:Singular list} D). Indeed, it is easy to check
   with the rules stated in \eqnref{eq:trafo rules for Todd} that
   the system $20H-6\E+5E_6$ is the image under $\Gamma_{\calt}$
   of the system $8H-\E-5E_6$, which is nonempty by \eqref{eq:virtdim}.
   To check that $8H-\E-5E_6$ has (projectively) a single element let $\Pi\subset X$ be the strict transform of a general plane through $\ell_6$, and consider the residual exact sequence
   \[0\rightarrow\mathcal{O}_X(7H-\E-4E_6)\rightarrow\mathcal{O}_X(8H-\E-5E_6)\rightarrow\mathcal{O}_\Pi((8H-\E-5E_6)|_\Pi)\rightarrow0.\]
   Since the system restricted to the plane $\Pi$ is the planar system of curves of degree two with five general base points, which consists of the unique conic through the five points, it will be enough to show that $7H-\E-4E_6$ is empty.
   Specialize the lines so that for $i=1,\dots,4$, $l_i$ and $l_{i+1}$ intersect at a point $P_i$.
   By Lemma \ref{lem:limit-fat-lines}, the limit of $7H-\E-4E_6$ consists of surfaces which have multiplicity 2 at the points $P_i$.
   Call $\Pi_i$ the strict transform of the plane $P_i\vee l_6$ and $\Pi_i'$ the strict transform of the plane $l_i\vee l_{i+1}$ (where for  projective subspaces  $A,B\subset \mathbb{P}^n$, $A\vee B$ denotes the span of $A$ and $B$).
   
   The restriction of the specialized system to $\Pi_i$ consists of septics containing $5l_6$, double at $P_i$ and with three further points, which are not aligned if the lines are general with the given restrictions; so this system is empty, which means that the four planes $\Pi_i$ are fixed components of the specialized system.
   The residual consists of cubic surfaces passing through the six (special) lines.
   It is not hard to see that the restriction of this residual to the planes $\Pi_1'$ and $\Pi_4'$ is noneffective, so these are also fixed components.
   What remains is the system $\call_1(1^{\times2})$, obviously empty.
   By semicontinuity, $7H-\E-4E_6$ is also empty for lines in general position.

   Finally, the linear system in Theorem \ref{ex:Singular list} A)
   is explained by the means of the cubo-cubic transformation $\tcalc$
   based at the first $4$ fat lines. To this
   end we show that already the linear system $10H-3\E$ is special.
   Indeed, we have $\expdim(10H-3\E)=54$ but the actual dimension
   is $56$ since this system is the image of the system
   $6H-\E$, which has the expected dimension $56$ and is non special
   by the aforementioned result of Hartshorne and Hirschowitz.
   Then, vanishing along an additional line imposes at most $11$ conditions,
   so that the system $10H-3\E-E_5-\dots-E_9$ has dimension at least  $1$.
   To show that it is exactly 1, we exhibit again a specialization for which the dimension is 1.
   Denote, $Q_i$ for each $i\in \{1,2,3,4\}$ the unique quadric containing all lines $l_j$ with $j\in \{1,2,3,4\}, j\ne i$.
   Let $\{P_1^1,P_2^1\}=Q_1\cap l_5$, $\{P_1^2,P_2^2\}=Q_2\cap l_5$, and $\{P_1^3,P_2^3\}$ two points in $Q_3$.
   Specialize the four last lines so that
   \begin{align*}
   l_6&=P_1^1 \vee P_1^3, & l_7&=P_2^1 \vee P_1^3,\\
   l_8&=P_1^2 \vee P_2^3, & l_9&=P_2^2 \vee P_2^3.
   \end{align*}
  
   By Lemma \ref{lem:limit-fat-lines}, the limit system has multiplicity 2 at each point $P_i^j$.
   If the points $P_i^3$ are general, the restriction of the limit system to $Q_i\cong \P^1\times\P^1$ consists of curves of bidegree $(10,10)$ containing three triple sections $l_i^3, i=2,3,4$ of type $(3,0)$ with two triple points at $l_1\cap Q_1$, which by B\'ezout forces the sections of type $(0,1)$ through these two points to split twice each; the residual are curves of bidegree $(1,6)$, with two double points $P_1^1$, $P_2^2$ and passing through 8 additional general simple points; this is known to be empty, see \cite{Lenarcik}. The same analysis applied to $Q_2, Q_3$ shows that the three quadrics $Q_1, Q_2, Q_3$ are fixed components of the system.
   The residual system consists of surfaces of degree 4 passing with multiplicity 1 through all lines except $l_4$; it is not hard to see that the only such surface splits as
   $Q_4 + (P_1^1\vee P_2^1 \vee P_1^3)+(P_1^2\vee P_2^2 \vee P_2^3)$.
   	
   The remaining case C) is the most interesting and it is dealt with in the next section.
\endproof

\begin{remark}\label{SpecialByCremona} Note that in the proof above we obtained special systems by  applying
cubo-cubic or Todd transformations to particular divisors.  Similarly, by applying them to
	$aH-(E_1+\dots+E_4)$ or to $aH-(E_1+\dots+E_6)$ respectively, we obtain  systems
	$(3a-8)H-(a-3)(E_1+\dots+E_4)$ and  $(19a-72)H-(5a-19)(E_1+\dots+E_6)$. For $a$ big enough
	the resulting systems have smaller expected dimensions than their actual  dimensions,
	thus producing many examples of unexpected surfaces.
\end{remark}

\section{The linear system $\call_{12}(3^{\times 6},2)$}\label{sec:7 lines}
   This section is devoted to the system C) in Theorem \ref{ex:Singular list}.

   For the system
   $\call=\call_{12}(3^{\times 6},2)$ we have
   $$\virdim(\call)=-2, \mbox{ hence } \expdim(\call)=0.$$

   We will now show that nevertheless the projectivisation of this system is non-empty and contains
   a single irreducible element (and no other elements). We do not see how to show the speciality of this system
   using birational transformations of the ambient space, so we take a different approach.

\proofof{Theorem \ref{ex:Singular list}, part II}
   Let $l_1,\ldots,l_7$ be general lines in $\P^3$ and
   let $f:X\to\P^3$ be the blowup of $\P^3$ along the first six lines with
   exceptional divisors $E_1,\dots,E_6$, respectively. As usual
   we write $\E$ for the union of the exceptional divisors $E_1,\dots,E_6$
   and denote by $H$
   the pullback of the hyperplane bundle to $X$. Then
   $K_X=-4H+\E$.
   We study the morphism defined by the anti-canonical system
   $$M=-K_X=4H-\E.$$			
   The divisors in this system correspond to quartics in $\P^3$
   vanishing along the first six lines. By the aforementioned result
   of Hartshorne and Hirschowitz we have
   $$h^0(X,M)=5,$$
   hence $M$ defines a rational map $\varphi_M: X\to \P^4$. The map is a morphism; one sees this by looking at
   reducible quartics containing the six lines (namely, products of two quadrics, each containing three of the six lines), and concluding that a base point on the blowup of one of the six lines
   implies that there would be a transversal to all six general lines.
   Note that $\varphi_M$ contracts  lines transversal to any $4$ of the $6$ given lines (there are $2\binom{6}{4}=30$ such contracted lines).
   By \cite{Vaz01} (or by computer calculations) the  image $Y$ of $\varphi_M$ is a quartic hypersurface, and $\varphi_M$ is  generically $1:1$.


\subsection{Existence}

   Let now $C\subset \P^4$ be the image of the seventh line under $\varphi_M$.
   Then $C$ is a rational normal curve of degree $4$. It is well known
   that its secant (chordal) variety is a determinantal threefold $T$ of degree $3$
   in $\P^4$, singular along $C$, see for example \cite[Proposition 9.7]{Har92}.

   Note that $T\cap Y$ is an element in $\calo_{\P^4}(3)\restr{Y}$, so  it pulls back  to a surface $D$
   of degree $12$ in $\P^3$, which vanishes to order $3$ along the first six
   lines.

   Moreover, since $T$ is singular along $C$ and $C\subset T\cap Y$ we conclude
   that $D$ is singular along $L_7$. Hence $D$ has vanishing orders along the lines $l_i$ with $i=1,\ldots,7$
   corresponding exactly to those required for part C) of Theorem \ref{ex:Singular list}.
   Thus the existence of duodecics $D$ vanishing to order $3$ along six general lines
   and singular along the seventh line is established.

   \subsection{Uniqueness}

   To show that the surface is unique we apply semicontinuouity, proving it in the particular case when the lines $l_1, \dots, l_5$ are chosen to be disjoint lines in a general cubic surface, and $l_6, l_7$ are general.

   A general cubic $\Sigma$ in \(\mathbb P^3\) can be understood as the blowup $\sigma:\Sigma\rightarrow\P^2$ of \(\mathbb P^2\) at 6 general points, $A_1,\ldots,A_6$, embedded by the system $\mathcal{O}_{\mathbb{P}^2}(3)\otimes I_{A_1}\dots \otimes I_{A_6}$. The pullback of
   $\mathcal{O}_{\mathbb{P}^2}(1)$ on the cubic surface is denoted by $h$.
   The $27$ lines on $\Sigma$  are identified as the $6$ exceptional divisors $a_1, \dots, a_6$, the $\binom62=15$ lines through pairs of points, and the $6$ conics through $5$ out of $6$ points.
   We choose $l_1, \dots, l_5$ to be five of these latter lines.

   Denote by $\pi: W\to \mathbb P^3$ the blowup of the 7 lines, $l_1,\dots, l_7$,  (in this particular position) with exceptional divisors $E_1,\dots, E_7$. Let $S\subset W$ (resp. $N\subset W$) be
   the proper transform of the cubic (resp. of the duodecic $D$).  In $S$ the divisor $E_i|_S$ has class $2h-\mathbbm{a}+a_i$ for $i=1,\dots,5$ (where as usual $\mathbbm{a}=a_1+\dots+a_6$). $S$ is the blowup $\pi: S\to \Sigma$  of the cubic at 6 additional points, $F_1, F_2, F_3, G_1, G_2, G_3$, namely its intersection points with $l_6$ and $l_7$. Denote the exceptional curves of this blowup by $f_1,f_2,f_3,g_1,g_2,g_3$ respectively, and set $\mathbbm{f}=f_1+f_2+f_3$, $\mathbbm{g}=g_1+g_2+g_3$. The fact that these points $S\cap(l_6\cup l_7)$ lie on two lines translates to
   \begin{equation}\label{eq:pencils}
   h^0(S,\mathcal O_{S} (3h-\mathbbm{a}-\mathbbm{f}))=
   h^0(S,\mathcal O_{S} (3h-\mathbbm{a}-\mathbbm{g}))=2.
   \end{equation}
   \begin{remark}\label{rm:no-three-collinear}
   	We claim that for a general choice of the lines $l_6, l_7$, no three of the image points of $F_1, F_2, F_3, G_1, G_2, G_3$ in $\P^2$ are collinear.
   	To prove this, first observe that $F_1, F_2$ (resp. $G_1, G_2$) can be chosen arbitrarily, and then $F_3$ (resp. $G_3$) is determined as the third intersection of $l_6=F_1\vee F_2$ (resp. $l_7=G_1\vee G_2$) with the cubic surface $\Sigma$.
   	So we can assume that the image points of $F_1, F_2$ (resp. $G_1, G_2$) in $\P^2$ are not aligned with any $A_i$.
   	This already implies that the images of $F_1, F_2, F_3$ (or equivalently $G_1, G_2, G_3$) are not collinear, because if they were, by \eqref{eq:pencils} there would be at least one section of $\mathcal{O}_S(3h-\mathbbm{a}-\mathbbm{f})$ vanishing on the line containing them, and then the six points $A_1, \dots, A_6$ would belong to a conic, which contradicts their being general points.
   	Now fix a choice of $F_1, F_2, F_3$ and let $t\subset \Sigma$ be the pullback of the triangle through their images in $\P^2$.
   	Consider the rational map
   	\begin{align*}
   	\tau:\Sigma \times \Sigma & \dashrightarrow \Sigma \\
	(G_1,G_2) & \mapsto G_3   	
   	\end{align*}
   	which is clearly dominant.
   	If $U$ is the open subset of $\Sigma\times\Sigma$ where $\tau$ is defined, then choosing $(G_1,G_2)\in U\setminus ((t\times \Sigma) \cup (\Sigma\times t) \cup \tau^{-1}(t))$ guarantees that the images of $F_i, F_j, G_k$ in $\P^2$ are not aligned.
   	By symmetry, a general choice of $l_6$ and $l_7$ will give that the images of $G_i, G_j, F_k$ are not aligned either.
   \end{remark}

    Now consider the residual exact sequence:
      \[0 \to \mathcal{O}_{W}(N-S)\to \mathcal{O}_{W}(N) \to \mathcal{O}_S(N|_S) \to 0\]
   We need to see that the global sections of the sheaf in the middle have dimension (at most) 1; we do so by proving that the global sections of the two other sheaves have dimensions 0 and 1 respectively.

   The restriction of the class of the duodecic to S is:
\begin{multline}
   N|_S=12H|_S-3\E|_S+E_7|_S=\\
   12(3h-\mathbbm{a})-3\sum_{i=1}^5(2h-\mathbbm{a}+a_i)-3\mathbbm{f}-2\mathbbm{g}=\\
   6h+3a_6-3\mathbbm{f}-2\mathbbm{g}.
\end{multline}


   Thus $a_6$ (which as a line in $\P^3$ is the unique common transversal to $l_1,\dots,l_5$) is a fixed component of the restricted system $N|_S$.
   After subtraction of this fixed part the residual corresponds to the planar system $\mathcal{O}_{\mathbb{P}^2}(6)\otimes I_{F_1}^3\otimes I_{F_2}^3\otimes I_{F_3}^3\otimes I_{G_1}^2\otimes I_{G_2}^2\otimes I_{G_3}^2$
   (by a slight abuse of notation we identify the points $F_1,F_2, F_3, G_1, G_2, G_3$ with their images under $\sigma$ in $\mathbb{P}^2$). Any sextic in this system splits by the B\' ezout theorem into the sum of three conics: $C_i$ through $F_1, F_2, F_3, G_j, G_k$ for $i=1,2,3$ and $j,k$ such that $\{i,j,k\}=\{1,2,3\}.$
   Note that these conics are irreducible by Remark \ref{rm:no-three-collinear}.
   Thus, the dimension of this system is $1$ and consequently $h^0(S,\calo_S(N|_S))=1$.
 It remains to see that $h^0(W,\mathcal{O}_W(N-S))=0$, which we prove in Proposition \ref{pro:nonic-empty} below.
\endproof

    \begin{proposition}\label{pro:nonic-empty}
	Let $l_1, \dots, l_5\subset\P^3$ be disjoint lines in a general cubic surface, and $l_6, l_7$ general lines in $\P^3$.
	Let $W\rightarrow\P^3$ be the blowup of the seven lines, and denote $E_i$ the exceptional divisors, $\E=E_1+\dots+E_7$ as above.
	Then $H^0(\mathcal{O}_W(9H-2\E-E_6))=0$.    	
    \end{proposition}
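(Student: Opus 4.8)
The goal is to prove $h^0(\calo_W(M))=0$ for $M=9H-2\E-E_6$, i.e.\ that there is no degree-$9$ surface with multiplicity $2$ along $l_1,\dots,l_5,l_7$ and multiplicity $3$ along $l_6$; note $\virdim(M)=0$, so the assertion is that this numerically borderline system is actually empty. Since $h^0$ is upper semicontinuous, it suffices to exhibit \emph{one} position of the general lines $l_6,l_7$ -- keeping $l_1,\dots,l_5$ on the fixed general cubic $\Sigma=S$ -- for which the system is empty, and then propagate back. I would obtain such a position by degenerating $l_6$ and $l_7$, using Lemma \ref{lem:limit-fat-lines} to turn each incidence of lines into a forced multiple point of the limit surface, and then peel fixed components off the limit system until I reach a residual that is visibly empty. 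The natural terminal system is $M-2S=3H-3E_6-2E_7$: a cubic of multiplicity $3$ along $l_6$ must be a union of three planes through $l_6$, and no such union can be singular along the general line $l_7$, so $3H-3E_6-2E_7$ has no sections.

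The engine of the degeneration is the unique transversal $a_6$ to $l_1,\dots,l_5$, which is a $(-1)$-curve on $S$ and, as a line in $\P^3$, meets each of $l_1,\dots,l_5$. A direct B\'ezout computation gives $M|_S\cdot a_6<0$, so $a_6$ is already a fixed component of the restriction of $M$ to the cubic; after removing it one is left with an honest planar system, but that system has strictly positive expected dimension, so restriction to $S$ alone is inconclusive. The way to exploit $a_6$ is instead to work in a plane $\Pi$ containing it: there the five points $\Pi\cap l_i$ all lie on $a_6$ and hence are collinear, and B\'ezout against the line $a_6$ then forces $a_6$, and subsequently whole planes, to split off. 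I would therefore specialize $l_6$ (and, if needed, $l_7$) into planes through $a_6$, so that the high multiplicities concentrate on this collinear configuration.

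Carrying this out, I would peel the forced planes off one at a time, each time recording the drop in degree and in the multiplicities along the surviving lines, and continue until the degree is low enough that the residual is one of the manifestly empty systems, such as $3H-3E_6-2E_7$ or $\call_1(1^{\times 2})$. Throughout, Remark \ref{rm:no-three-collinear}, together with the freedom to keep the image points $F_i,G_j$ in general position, is what guarantees that the curves cut out on the auxiliary planes (and on any quadrics $\cong\P^1\times\P^1$ that appear) remain in the ranges where the corresponding planar systems are empty -- for instance a plane quartic double at five points not lying on a conic, in the manner of the argument used for part A). Semicontinuity then carries $h^0=0$ from the special configuration back to the general one.

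The main obstacle is not any single estimate but the combinatorial design of the degeneration. Because a degree-$9$ surface is large relative to a plane or even to the cubic, no single restriction can be empty; the whole argument succeeds only if the special position is arranged so that \emph{enough} fixed planes are forced to split off, and the delicate part is then verifying -- case by case -- that each residual system produced along the way is genuinely empty, while checking that the chosen specialization has not accidentally created an extra section (precisely the pitfall that Remark \ref{rm:no-three-collinear} is designed to exclude). Managing this bookkeeping is where essentially all the difficulty lies.
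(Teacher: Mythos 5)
Your high-level plan---fix a special position of the lines, use Lemma~\ref{lem:limit-fat-lines} to convert incidences into forced multiple points, peel off fixed divisorial components until an evidently empty residual remains, and conclude by semicontinuity---is exactly the paper's strategy, and your two concrete computations are correct ($a_6$ is a fixed component of $M|_S$, and $3H-3E_6-2E_7$ is empty, since a cubic with a triple line is a union of three planes through that line, none of which can contain the disjoint line $l_7$). But the proposal stops precisely where the proof begins: you yourself say that designing the degeneration and verifying, case by case, that each residual system is empty ``is where essentially all the difficulty lies,'' and none of that is carried out. Worse, the one mechanism you do specify appears to fail. If $l_6$ (or $l_7$) is specialized into a plane $\Pi\supset a_6$ while remaining disjoint from the other lines, no incidences among the seven lines are created, so Lemma~\ref{lem:limit-fat-lines} forces nothing; and the restriction of the system to $\Pi$ is \emph{effective}: it contains $3l_6+a_6+Q$, where $Q$ is a quintic through the five collinear points $a_6\cap l_i$ with a double point at $\Pi\cap l_7$, and such quintics exist in abundance. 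So $\Pi$ is not a fixed component and nothing splits off; the same count shows this persists even if you route $l_6$ through some of the points $a_6\cap l_i$ to create incidences. Finally, your ``terminal system'' $M-2S=3H-3E_6-2E_7$ presupposes that $\Sigma$ splits off twice, but---as you note yourself---$M|_S$ is effective for general $l_6,l_7$, so this peeling is only available \emph{after} a successful specialization, which is exactly the missing ingredient.

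For comparison, the paper's specialization is of a different nature: it keeps $l_6$ general and moves $l_7$ onto the cubic $\Sigma$ itself, as the proper transform $l_7'$ of the line $A_1\vee A_2$ (class $h-a_1-a_2$), which meets $l_1$ and $l_2$ at points $P_1,P_2$. Lemma~\ref{lem:limit-fat-lines} then forces multiplicity $2+2=4$ at $P_1,P_2$, and it is this extra vanishing, not the transversal $a_6$, that makes the restriction to $\Sigma$ non-effective (via Remark~\ref{rm:no-three-collinear}), so $\Sigma$ splits off; then the two planes $l_6\vee P_i$ split off (a sextic plane curve containing $3l_6$, triple at $P_i$ and through four general points cannot exist); then $\Sigma$ splits off once more; and the residual would be a single plane containing $l_6$, $P_1$ and $P_2$, which is impossible. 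Your proposal would need a comparably explicit degeneration with all residuals checked before it could count as a proof; as written, it is an outline with a broken central mechanism.
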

    \begin{proof}
    	
    	By projection formula we have $$H^0(W, \calo_W(9H-2\E-E_6))=H^0( \P^3, \calo_{\P^3}(9) \otimes I_{l_1}^2 \dots  \otimes  I_{l_5}^2\otimes  I_{l_6}^3 \otimes I_{l_7}^2).$$
    	
    	We specialize further $l_7$ to $l_7'$, which is the line on $\Sigma$, the proper transform of the line $A_1\vee A_2$ in $\P^2$. This line intersects $l_1$ and $l_2$ and no other line $l_i$. Let $P_1=l_1\cap l_7'$ and $P_2=l_2\cap l_7'$.
    	
		By semicontinuity and Lemma \ref{lem:limit-fat-lines}, it will be enough to show that there is no surface $T$ of degree 9 in $\mathbb{P}^3$ which is singular along all 7 lines, has multiplicity 3 along $l_6$, and multiplicity 4 at $P_1$ and $P_2$.
		The restriction of such a surface to $\Sigma$ would be, in the notations as above,
		\begin{multline*}
		    T|_{\Sigma}=9(3h-\mathbbm{a})-2\sum_{i=1}^5(2h-\mathbbm{a}+a_i)-2(h-a_1-a_2)-3\mathbbm{f}=\\
			5h+a_1+a_2-(a_3+a_4+a_5)+a_6-3\mathbbm{f}.
		\end{multline*}		
After taking out the fixed components \(a_1+a_2+a_6\) we are left with the planar system $\calo_{\P^2}(5)\otimes I_{F_1}^3\otimes I_{F_2}^3\otimes I_{F_3}^3\otimes I_{A_1}\otimes I_{A_2}\otimes I_{A_3} $, which by Remark \ref{rm:no-three-collinear} is non-effective. Therefore $\Sigma$ must be a component of $T$.
		
		Let $U=T-\Sigma $, which would be a surface of degree 6 passing through all 7 lines, with multiplicity 3 along $l_6$, and multiplicity 3 at $P_1$ and $P_2$.
		Let
		\(\Pi_1, \Pi_2\) be the two planes $\Pi_i=l_6\vee P_i, \ i=1,2$.
		The restriction $U|_{\Pi_i}$ is a plane curve of degree $6$ containing $l_6$ as a triple component, and vanishing at $P_i$ to order $3$ and vanishing in $4$ additional points, which are intersection points $\Pi_i\cap l_j$ for $j\neq 6$ and such that $P_i\notin l_j$. Since such a curve does not exist, $\Pi_1$ and $\Pi_2$ must be components of $U$.
		
		Let $V=U-\Pi_1-\Pi_2$ be a surface of degree 4 passing through all 7 lines, and singular at $P_1$ and $P_2$.
		A similar computation as above shows that $V|_{\Sigma}$ is a system of divisors $h+a_1+a_2+a_6$ passing through $F_1,F_2,F_3$, which is non-effective, so $\Sigma$ is again a component of $V$. But $V-\Sigma$ would then be a plane containing $l_6$, $P_1$, and $P_2$, which is clearly impossible, and we are done.
    \end{proof}

    \begin{lemma}\label{lem:resolution}
    	Let $l_1, \dots, l_7\subset\P^3$ be general lines in $\P^3$, let $W\rightarrow\P^3$ be the blowup of the seven lines, and denote $N \subset W$ the pullback of the unique surface in $\mathcal{L}_{12}(3^{\times 6},2)$.
    	Then $N$ is smooth.    	
    \end{lemma}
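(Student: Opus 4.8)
The plan is to exploit the construction of $D$ from the existence part of the proof. Recall that $D$ is the pullback to $\P^3$ of the divisor $Z:=T\cap Y\in|\calo_Y(3)|$, where $Y=\varphi_M(X)\subset\P^4$ is the quartic anticanonical image of $X=\Bl_{l_1,\dots,l_6}\P^3$ and $T\subset\P^4$ is the secant cubic threefold of the rational normal quartic $C=\varphi_M(l_7)$. Blowing up the seven lines in any order, I would form $W$ by blowing up $l_7$ last, so that $W=\Bl_{l_7}X$ and $N$ is the proper transform of $\widetilde D:=\varphi_M^{-1}(Z)\in|12H-3\E|$, where $\E=E_1+\dots+E_6$ and $\widetilde D$ is the proper transform of $D$ in $X$. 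The first point to record is that, for general $l_7$, the thirty points $\Sigma=\sing Y$ (images of the thirty contracted transversals) avoid $Z$: $\Sigma$ is fixed once $l_1,\dots,l_6$ are chosen, while the secant cubic $T$ of a general $C$ misses the finitely many points of $\Sigma$. Since $Y$ is a normal hypersurface and $\varphi_M$ is a birational morphism contracting exactly the thirty transversals, Zariski's Main Theorem makes $\varphi_M$ an isomorphism over $Y\setminus\Sigma$; as $Z$ is disjoint from $\Sigma$, the divisor $\widetilde D$ is disjoint from the thirty contracted lines and $\varphi_M$ restricts to an isomorphism $\widetilde D\xrightarrow{\sim}Z$. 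Because regularity of a reduced variety is intrinsic, this transports the entire problem to $Z$: it suffices to show that $Z$ is smooth away from $C$, and that the singularities of $Z$ along $C$ are resolved by blowing up $C$ (equivalently $l_7$). In particular, no separate analysis of the triple lines $l_1,\dots,l_6$ is needed, their resolution being built into the anticanonical map $\varphi_M$.

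Next I would analyse $Z$ along $C$. For general $l_7$ the curve $C$ avoids $\Sigma$, so $Y$ is smooth along $C$. Writing $T=\{\det M=0\}$ for the symmetric catalecticant $M$ of the binary quartic, so that $C=\{\mathrm{rank}\,M\le 1\}$ (see \cite[Proposition 9.7]{Har92}), a direct local computation at a point of $C$ shows that the transverse quadratic part of $\det M$ in the three normal directions to $C$ in $\P^4$ is nondegenerate; thus $T$ has an ordinary double point transverse to $C$ at every point of $C$. Restricting this rank-three normal quadric to the two-dimensional normal space of $C$ inside the smooth threefold $Y$ keeps it nondegenerate provided that plane is not tangent to the transverse quadric cone; granting this, $Z=T\cap Y$ acquires a transverse node along $C$. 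A one-chart computation then shows that blowing up the smooth curve $C\subset Y$ (equivalently $l_7\subset X$) separates the two analytic branches and yields a smooth surface over a neighbourhood of $C$, with $N$ meeting $E_7$ in a bisection. This gives smoothness of $N$ near $E_7$.

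It remains to show that $Z=T\cap Y$ is smooth away from $C$. Since $Z$ avoids $\sing Y$ and $T$ is smooth off $C$, a point $p\in Z\setminus C$ can be singular on $Z$ only if the hypersurfaces $T$ and $Y$ are tangent at $p$, that is $T_pT=T_pY$. I would therefore establish that, for general $l_7$, the secant cubic $T=\mathrm{Sec}(C)$ meets $Y$ transversally outside $C$. Combined with the previous paragraph, this shows that $N$ is smooth everywhere.

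The main obstacle is exactly the genericity underlying the last two paragraphs: the transversality of $T$ and $Y$ off $C$, and the nondegeneracy of the transverse quadric at every point of $C$. The difficulty is that $T=\mathrm{Sec}(C)$ is a very special cubic determined by $C$, so Bertini does not apply directly; one must instead show that as $l_7$ varies the family of secant cubics is mobile enough to force transversality for the general member, for instance through an incidence correspondence over the parameter space of seventh lines together with a dimension count verifying that the bad loci (tangency off $C$, degenerate transverse quadric, or meeting $\Sigma$) are proper. Alternatively, since smoothness of $N$ is an open condition on the seven-line configuration, and the unique surface $D$ (hence $N$) varies in an algebraic family, it suffices to exhibit one configuration with $N$ smooth; this can be certified for a random configuration by the same Singular computation used elsewhere, and openness then yields smoothness for general lines.
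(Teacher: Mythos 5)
Your primary route is not a complete proof: it ends by conceding that its two load-bearing claims --- transversality of $T=\mathrm{Sec}(C)$ with $Y$ away from $C$, and nondegeneracy (rank $2$) of the transverse quadratic part of $T\cap Y$ at \emph{every} point of $C$ --- are ``the main obstacle'' and remain to be established. These are not routine Bertini statements, because $T$ is rigidly determined by $C$ and moves in a constrained family as $l_7$ varies; worse, for the single blowup of $l_7$ to resolve $\widetilde D$ you need a transverse node at every point of $C$, so the incidence/dimension count you allude to would have to show the bad locus is \emph{empty} (not merely of positive codimension), and since the whole curve $C$ moves with $l_7$ this is a genuinely delicate fibered argument that is nowhere carried out. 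There are further unjustified inputs earlier in the argument: that $\varphi_M$ contracts \emph{exactly} the thirty transversals, that $Y$ is normal with singular locus equal to those thirty points (both are needed for your appeal to Zariski's Main Theorem --- the paper only records that $\varphi_M$ is generically $1{:}1$ and contracts those lines), and that $\mathrm{Sec}(\varphi_M(l_7))$ avoids those thirty points for general $l_7$, which you assert rather than prove. Each of these is plausible, but none is free.

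What does work is precisely your closing fallback, and it is in fact the paper's entire proof: smoothness of the strict transform is an open condition on the configuration of lines (semicontinuity of multiplicities), so it suffices to certify a single explicit configuration, which the paper does with the Singular script in the Appendix --- it checks that $\sing D$ is contained in the seven lines with the prescribed multiplicities, and that the tangent cone of $D$ along each line (that is, the curve $N\cap E_i$ in $E_i\cong\P^1\times\P^1$) is smooth, so that $N\to D$ is even an embedded resolution; working over a large finite characteristic is harmless for exactly this semicontinuity reason. So the honest assessment is that your geometric analysis is an attractive heuristic for \emph{why} the lemma holds, and could conceivably be completed into a computation-free proof, but as written the only complete argument in your proposal is the last paragraph, which coincides with the paper's.
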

    \begin{proof}
		By semicontinuity of multiplicities 
		it is enough to check smoothness for a particular choice of lines.
		We do this computationally, see the Appendix.
    \end{proof}

    \begin{corollary}\label{cor:general-type}
    	Let $D$ be the unique surface in $\mathcal{L}_{12}(3^{\times 6},2)$, and $N\subset W$ its smooth model. Then $D$ is a surface of general type, with $p_g=6$, $q=0$, and $K_N^2=8$.
    \end{corollary}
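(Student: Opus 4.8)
The plan is to treat $N$ as a smooth divisor on the threefold $W$ and read off its invariants by adjunction, Riemann--Roch, and the explicit geometry of the anticanonical map $\varphi_M$. By Lemma~\ref{lem:resolution} the surface $N$ is smooth, so adjunction applies. Writing $\E=E_1+\dots+E_7$, the blowup of $\P^3$ along the seven disjoint lines has $K_W=-4H+\E$, while the class of the duodecic is $N=12H-3\E+E_7$ (that is, $12H-3(E_1+\dots+E_6)-2E_7$). Adjunction then gives
\[K_N=(K_W+N)\big|_N=\bigl(8H-2\E+E_7\bigr)\big|_N.\]
I would compute $K_N^2=(8H-2\E+E_7)^2\cdot(12H-3\E+E_7)$ as a triple product on $W$, using the intersection numbers of \eqref{eq:intersection product2} extended to the seven-line blowup ($H^3=1$, $HE_i^2=-1$, $E_i^3=-2$, all other monomials vanishing); the expansion yields $K_N^2=8$.

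Next I would compute $\chi(\calo_N)$. From $0\to\calo_W(-N)\to\calo_W\to\calo_N\to0$, Serre duality on $W$, and $h^i(\calo_W)=h^i(\calo_{\P^3})=0$ for $i>0$, one obtains $\chi(\calo_N)=1+\chi(W,K_W+N)$. Since $R^jf_*\calo_W(8H-2\E+E_7)=0$ for $j>0$ and the pushforward is $\calo_{\P^3}(8)\otimes I_{l_1}^2\cdots I_{l_6}^2\otimes I_{l_7}$, its Euler characteristic equals the naive count $\virdim(\call_8(2^{\times 6},1))=\binom{11}{3}-6c_{2,8}-c_{1,8}=6$; hence $\chi(\calo_N)=7$, i.e. $1-q+p_g=7$. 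Moreover the sequence $0\to\calo_W(K_W)\to\calo_W(K_W+N)\to\calo_N(K_N)\to0$ together with $H^0(W,K_W)=H^1(W,K_W)=0$ (Serre duality again) identifies $p_g=h^0(N,K_N)=h^0(W,8H-2\E+E_7)$.

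The decisive step is to evaluate this last $h^0$, and here I would use $\varphi_M$. Letting $g:W\to X$ be the blowup of the seventh line, the class is $g^*(2M)-E_7$ with $M=4H-(E_1+\dots+E_6)=-K_X$, so its sections are the sections of $2M$ on $X$ vanishing along $l_7$. As $M=-K_X$ is nef and big (it is $\varphi_M^*\calo_{\P^4}(1)$ for the generically injective morphism $\varphi_M:X\to Y$), Kawamata--Viehweg vanishing gives $h^0(X,2M)=\chi(X,2M)=\virdim(\call_8(2^{\times 6}))=15=h^0(Y,\calo_Y(2))$, so $\varphi_M^*$ identifies $H^0(X,2M)$ with the quadric sections of the quartic $Y$. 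Those vanishing along $l_7$ correspond to quadrics through $C=\varphi_M(l_7)$, a rational normal quartic; since no quadric contains $Y$, these are precisely the quadrics of $\P^4$ through $C$, and $h^0(\P^4,I_C(2))=\binom{6}{2}-(2\cdot4+1)=6$. Therefore $p_g=6$, and $1-q+p_g=7$ forces $q=0$.

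Finally, $p_g=6>0$ gives $\kappa(N)\ge0$, while $K_N^2=8>0$ forces $K_{\min}^2\ge8>0$ on the minimal model; among minimal surfaces with $\kappa\ge0$ only those of general type have positive $K^2$, so $\kappa(N)=2$ and $N$ (hence $D$) is of general type. I expect the main difficulty to lie in the evaluation $h^0(W,8H-2\E+E_7)=6$: the triple intersection and the Euler characteristic are routine, but isolating $p_g$ (and thereby $q$) requires the $2$-normality $h^0(X,2M)=15$ and the fact that $\varphi_M$ is an isomorphism near the general line $l_7$, so that vanishing along $l_7$ matches vanishing along $C$.
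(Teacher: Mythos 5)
Your proposal is correct, and its backbone is the same as the paper's: adjunction on $W$ giving $K_N=(8H-2\E+E_7)|_N$, the triple-product evaluation $K_N^2=(8H-2\E+E_7)^2\cdot(12H-3\E+E_7)=8$, and the computation of $p_g$ by pushing the class $K_W+N$ down to $X$ and reading it as sections of $2M$ vanishing along $l_7$, i.e.\ quadrics through the rational normal quartic $C=\varphi_M(l_7)$. Two of your steps, however, genuinely diverge from the paper, and in a useful direction. First, for $q$: the paper runs the residual sequence for $2K_W+2N$, computes $\chi(N,\calo_N(2K_N))=20-5=15$ as a difference of two virtual dimensions, and then extracts $q$ via Riemann--Roch on $N$; you instead get $\chi(\calo_N)=1+\chi(W,K_W+N)=1+6=7$ directly from $0\to\calo_W(-N)\to\calo_W\to\calo_N\to0$ and Serre duality on the threefold $W$, which needs only the single evaluation $\chi(W,8H-2\E+E_7)=\virdim(\call_8(2^{\times 6},1))=6$ -- shorter and cleaner. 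Second, the paper passes from $h^0(X,\calo_X(2M)\otimes\cali_{L_7})$ to $h^0(Y,\calo_Y(2)\otimes\cali_C)$ without comment; this silently assumes that every section of $2M$ on $X$ is pulled back from a quadric section of $Y$ (a $2$-normality statement). Your Kawamata--Viehweg argument -- $M=-K_X$ is nef and big, so $h^0(X,2M)=\chi(X,2M)=15=h^0(Y,\calo_Y(2))$, whence $\varphi_M^*$ is an isomorphism on these spaces -- supplies exactly the missing justification, so your write-up is in this respect more complete than the paper's. Finally, you obtain $h^0(\P^4,\cali_C(2))=15-9=6$ from projective normality of the rational normal curve, where the paper instead cites the fact that $I_C$ is generated by six quadrics; both are standard and equivalent here.
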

    \begin{proof}
    	Denote $\pi_6:X\rightarrow\P^3$ the blowup of the first 6 lines, and $\tau :W\rightarrow X$ the blowup of $l_7$.
    	By Lemma \ref{lem:resolution} $N$ is the strict transform of $D$ in $W,$ and by adjunction the canonical class of $N$ is given as $K_N=(K_W+N)|_N$.
    	Since $h^0(\mathcal{O}_W(K_W))=h^1(\mathcal{O}_W(K_W))=0$, $p_g(N)=h^0(\mathcal{O}_W(K_W+N))$.
    	This can be computed, with the help of the map $\phi_M$ introduced in the proof of the exitence of $D$.
    	Indeed, $K_W+N=-4H+\E+12H-3\E+E_7=-2K_W+E_7=\pi^*(2M)-E_7,$ so
    	\begin{equation*}
    	p_g(N)=h^0(W,\mathcal{O}_W(\pi^*(2M)-E_7)=
    	h^0(X,\mathcal{O}_X(2M)\otimes \mathcal{I}_{L_7})=
    	h^0(Y,\mathcal{O}_{Y}(2)\otimes \mathcal{I}_{C}),  	
    	\end{equation*}
    	where as before $Y=\phi_M(X)$ is a quartic threefold in $\P^4$, and $\mathcal{I}_C$ is the ideal sheaf of the quartic curve $C=\phi_M(L_7)$.
    	Now the residual exact sequence
    	\[0\rightarrow\mathcal{O}_{\P^4}(-2)\rightarrow\mathcal{O}_{\P^4}(2)
    	\rightarrow\mathcal{O}_{Y}(2)\rightarrow 0\]
    	gives
    	\[p_g(N)=h^0(\P^4,\mathcal{O}_{\P^4}(2)\otimes\mathcal{I}_C)=\dim_\C(I_C)_2,\]
    	where $I_C$ is the homogeneous ideal of $C$ in the coordinate ring of $\P^4$, and $(I_C)_2$ is the degree 2 piece.
    	The ideal of a rational normal quartic curve is well known: it is generated by 6 independent quadrics \cite[Examples 1.14 and 9.3]{Har92}. So $p_g(N)=6$.    	 
    	The selfintersection of the canonical divisor is computed as the intersection number of three divisor classes on $W$, which by \eqref{eq:intersection product2} is
    	\[
    	K_N^2=(K_W+N)^2\cdot N =(8H-2\E+E_7)^2\cdot(12H-3\E+E_7)=8.
    	\]
    	So we have $p_g(N)>0, K_N^2>0$, and hence $N$ is a surface of general type.
    	On the other hand, the exact sequence
    	\[0 \rightarrow \mathcal{O}_W(2K_W+N) \rightarrow \mathcal{O}_W(2K_W+2N) \rightarrow \mathcal{O}_N(2K_N)\rightarrow 0\]
    	tells us that $\chi(N,\mathcal{O}_N(2K_N))=\chi(W,\mathcal{O}_W(2K_W+2N))-\chi(W,\mathcal{O}_W(2K_W+N))$, and these two Euler characteristics can be computed as virtual dimensions with \eqref{eq:virtdim}:
    	\begin{align*}
    	\chi(W,\mathcal{O}_W(2K_W+2N))&=\chi(W,\mathcal{O}_W(16H-4\E+2E_7))=
    	\dim_{\mathrm{vir}}(\mathcal{L}_{16}(4^{\times6},2))=20\\
    	\chi(W,\mathcal{O}_W(2K_W+N))&=\chi(W,\mathcal{O}_W(4H-\E+E_7))=
\dim_{\mathrm{vir}}(\mathcal{L}_{4}(1^{\times6}))=5\\
    	\end{align*}
    	Therefore, by Riemann-Roch we have
    	$$15=\chi(N,\mathcal{O}_N(2K_N))=\frac12((2K_N)^2-2K_N\cdot K_N)+1-q+p_g=8+1-q+6,$$
    	so the irregularity vanishes, and $\chi(\mathcal{O}_N)=7$.
    \end{proof}

\section{Waldschmidt constants}\label{sec:Waldschmidt}
   The rest of this note is devoted to asymptotic invariants
   of the homogeneous ideal $I=I(Z_s)$ of $s$ reduced  general lines in $\P^3$.
Recall that $I=\oplus (I)_d$ and that the \emph{initial degree}
   of $I$ is defined as the number
   $$\alpha(I)=\min\left\{d:\; (I)_d\neq 0\right\}.$$

   The $m$-th symbolic power  in this situation is  $$I^{(m)}=\bigcap I(L_i)^m.$$

  The  asymptotic counterpart of $\alpha(I)$ is the \emph{Waldschmidt constant} of $I$,
   defined as
   $$\alphahat(I)=\inf\frac{\alpha(I^{(m)})}{m}=\lim\limits_{m\to\infty}\frac{\alpha(I^{(m)})}{m}.$$
   These constants were first studied by Waldschmidt \cite{Wal77} for ideals
   defining finite sets of points in  $\P^n$.
   They are very hard to compute in general.
   For ideals of general lines in $\P^3$, Waldschmidt constants were studied
   in \cite{GHVTb13, DHST14, DFST} and computed for up to $5$ lines; see
   \cite[Corollary 1.1]{GHVTb13} for up to 4 lines or \cite[Proposition B.2.1]{DHST14}.
   In this section we extend these results to $6$ lines. We expect that the value
   of the Waldschmidt constant for $7$ lines is determined by the duodecics
   of type C in Theorem \ref{ex:Singular list} and that
   Waldschmidt constants of a greater number of lines in $\P^3$ are governed by
   \cite[Conjecture 5.5]{GHVT13} (see \cite[Conjecture A]{DHST14} for a generalization).

\begin{proposition}
   Let $Z_6$ be the union of $6$ general lines in $\P^3$. Then
   $$\alphahat(I)=\frac{72}{19}$$
   for $I=I(Z_6)$.
\end{proposition}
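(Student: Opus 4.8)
The plan is to prove the two inequalities $\hatalpha(I)\le \tfrac{72}{19}$ and $\hatalpha(I)\ge \tfrac{72}{19}$ separately, in both cases exploiting the Todd transformation $\calt$ and its action $\Gamma_{\calt}$ on divisor classes recorded in \eqref{eq:trafo rules for Todd}. Throughout I identify $\alpha(I^{(m)})$ with the least $d$ for which $\call_d(m^{\times 6})$ is nonempty, equivalently the least $d$ with $dH-m\E$ effective on the blowup $X\to\P^3$ of the six general lines (so that $\E=E_1+\dots+E_6$), and I use the identity $\hatalpha(I)=\inf_m \alpha(I^{(m)})/m$ from the definition.

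For the upper bound I would use the six surfaces $D_1,\dots,D_6$ furnished by Theorem \ref{ex:Singular list} B): each has degree $12$, multiplicity $4$ along one line and $3$ along the other five, so its class is $12H-3\E-E_i=\Gamma_{\calt}(E_i')$ by \eqref{eq:trafo rules for Todd}. Summing them gives the effective class $\sum_{i=1}^{6}\bigl(12H-3\E-E_i\bigr)=72H-19\E=\Gamma_{\calt}(\E')$, which is a surface of degree $72$ of multiplicity exactly $19=4+5\cdot 3$ along each of the six lines. Hence $\call_{72}(19^{\times 6})\neq\varnothing$, so $\alpha(I^{(19)})\le 72$ and therefore $\hatalpha(I)\le \tfrac{72}{19}$.

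For the lower bound I would argue that, because $\calt$ is a birational map between two copies of $\P^3$ each carrying six general lines, the self-inverse map $\Gamma_{\calt}$ carries effective balanced classes to effective balanced classes: the strict transform of a general surface of degree $d$ with multiplicity $m$ along the six lines has class $\Gamma_{\calt}(dH-m\E)=(19d-72m)H-(5d-19m)\E$. Now any effective divisor on $X$ pushes forward to an effective (hence nonnegative-degree) divisor on $\P^3$, since $\pi_*E_i=0$ and $\pi_*H$ is a hyperplane; thus its $H$-coefficient is nonnegative. Applying this to the transform forces $19d-72m\ge 0$, so $\call_d(m^{\times 6})\neq\varnothing$ implies $d/m\ge \tfrac{72}{19}$. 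Consequently $\alpha(I^{(m)})\ge \tfrac{72}{19}\,m$ for every $m$, giving $\hatalpha(I)\ge \tfrac{72}{19}$, and combining the two estimates yields $\hatalpha(I)=\tfrac{72}{19}$.

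The step I expect to be the main obstacle is the effectivity transfer under $\Gamma_{\calt}$ used in the lower bound, namely the claim that the strict transform really has class $(19d-72m)H-(5d-19m)\E$ with no hidden contribution lowering the $H$-coefficient. This has to be justified through the resolution of $\calt$ in Section \ref{sec:Cremona}: one pulls $dH-m\E$ back to the full blowup carrying also the exceptional divisors over the $30$ transversals and the $6$ twisted cubics, transports it across the isomorphism induced by $\calt$, and pushes it down, verifying that the $\T$- and $\G$-components do not affect the $H$-coefficient of the image. Equivalently, one must confirm that $\Gamma_{\calt}$ preserves the pseudoeffective cone in the relevant region. That the birational structure is genuinely needed can be seen from the fact that the naive test by a symmetric nef class is too weak: the best symmetric nef divisor is the anticanonical $4H-\E$, for which $(4H-\E)^2\cdot(dH-m\E)=10d-36m\ge 0$ only yields the weaker bound $\tfrac{18}{5}<\tfrac{72}{19}$ coming from the transversal lines.
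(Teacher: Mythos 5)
Your proposal is correct and follows essentially the same route as the paper: the upper bound is obtained exactly as in the paper by summing the six duodecics of Theorem \ref{ex:Singular list} B) to get the effective class $72H-19\E$, and the lower bound uses the same mechanism --- transporting classes by $\Gamma_{\calt}$ and invoking nonnegativity of the degree of effective classes --- which the paper phrases via the specific family $(72m-1)H-19m\E$ transforming to a class of negative degree. The effectivity-transfer step you single out as the main obstacle is precisely the point the paper also treats as standard (``We check easily\dots''), so your argument is at the same level of rigor as, and not genuinely different from, the published one.
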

\proof
   Let $I$ be the
   homogeneous ideal of the union of six general lines $L_1,\ldots,L_6$.
   By Theorem \ref{ex:Singular list} for each $i=1,\ldots,6$, there exists
   a duodecic $D_i$ vanishing along $L_j$, $j\neq i$, to order $3$
   and along $L_i$ to order $4$. Then the symmetrized divisor
   $D=\sum_{i=1}^6 D_i$ has degree $72$ and vanishing order $19$
   along all lines. Hence
   $$\alphahat(I)\leq  \frac{72}{19}.$$  
   In order to see the reverse inequality, it suffices to show
   that $h^0(\calo_{\P^3}(d)\otimes I^{(m)})=0$,
   whenever $\frac{d}{m}<\frac{72}{19}$. In fact, it suffices
   to show that
   $$h^0(\calo_{\P^3}(72m-1)\otimes I^{(19m)})=0$$
   for all $m\geq 1$.
   Here the Todd transformation again comes into the picture.
   We check easily, that the Todd transformation
   of the system $(72m-1)H-19m\E$ (in the notation from section \ref{sec:Cremona})      
   is $-19H+m\E$ which is obviously non-effective.
\endproof
   The reader might find it convenient to have an overview of known
   Waldschmidt constants for $s\leq 6$ general lines in $\P^3$ as well
   as the expected value for $s\geq 7$.
   \renewcommand{\arraystretch}{1.5}

   \begin{center}
\begin{tabular}{|c||c|c|c|c|c|c|}
   \hline
   s  & 1 & 2 & 3 & 4 & 5 & 6\\
   \hline
  $\alphahat(I_s)$ & 1 & 2 & 2 & $\frac{8}{3}$ & $\frac{10}{3}$ & $\frac{72}{19}$\\
  \hline
\end{tabular}
   \end{center}
   For $s=7$ we expect that $\alphahat(I_7)=21/5$ and
   for $s\geq 8$ we conjecture that $\alphahat(I_s)$ equals the largest real root
   of the polynomial
   $$\tau^3-3s\tau+2s.$$
   This is inspired by the conjectures given in \cite{GHVT13, DHST14}, but
   it is stronger, because it is specific about the values of $s$ for which it is conjectured to hold.


\bigskip
\paragraph*{\emph{Acknowledgement.}}
Harbourne was partially supported by Simons Foundation grant \#524858.
Ro\'e was partially supported by Spanish Mineco grant  MTM2016-75980-P and by Catalan AGAUR grant 2017SGR585.
Tutaj-Gasi\'nska was partially supported by
   National Science Centre grant 2017 /26/M/ST1/00707.
   This paper has also benefitted from discussions with various mathematicians:
   Ciro Ciliberto, John Lesieutre, Juan Migliore, Rick Miranda, Uwe Nagel, John Ottem, David Schmitz, Damiano Testa, Duco van Straten.
   This list is by no means complete and we apologize in advance to those who are not mentioned.
   %


\section*{Appendix}
\begin{verbatim}
//This is a text file, to be made available in the authors' 
//pages, containing scripts to 
//check some of the claims made in the paper, 
//in particular that the surface N in section 5
//is smooth.
//These scripts can be run in 
//Singular, https://www.singular.uni-kl.de/

//We work over a field of large characteristic, 
//althoug mainly interested in CC
//Since our purpose is to argue by semicontinuity, this is ok.

ring R=32003,(x,y,z,w),dp;        
LIB "primdec.lib"; option(redSB); 

//This procedure shows the degrees of the 
//generators of an ideal
//We use it to quickly check the dimension of the homogeneous 
//component in its initial  degree, 
//in particular for uniqueness of the 12-ic
proc writedegrees(ideal CC) {
	"generators degrees:";
	string ss="";
	int i;
	for (i=1;(i<=size(CC))&&(i<=500);i++) {
		ss=ss+string(deg(CC[i]))+" ";
	}
	ss;
}

//We are interested in sets of 7 general lines, 
//and we choose 7 "nice" lines to argue 
//by semicontinuity. 
//These turn out to be general enough for our purposes.
ideal L1=x,y;
ideal L2=z,w;
ideal L3=x+w,y+z;
ideal L4=x-y+w,2x+z;
ideal L5=2y+z+2w,x-z+w;
ideal L6=2y+z,2x-w;
ideal L7=x+z+2w,y+w;

//The rational map phi_M of section 5 
//is given by quartics through L1-L6. 
//We first check that there are 5 independent such quartics
ideal M1=intersect(L1,L2,L3,L4,L5,L6);
M1=std(M1);         
writedegrees(M1);
ideal M=M1[1],M1[2],M1[3],M1[4],M1[5]; 

//Next we check that the image of the map 
//is a quartic hypersurface
ring S=32003,(x,y,z,w,a,b,c,d,e),dp; 
ideal M=fetch(R,M);
ideal image=a-M[1],b-M[2],c-M[3],d-M[4],e-M[5];
image=std(image);
image=eliminate(image,xyzw);
image; //Output: a single polynomial of degree 4

//A line meeting this quartic properly will do so in 4 points.
//If its preimage consists of 4 points, 
//this will prove that the map is generically 1:1
setring R;
ideal philin=M[2],M[3],M[5]; //The preimage of line [a:0:0:b:0]
philin=std(philin);
philin=sat(philin,M)[1];
hilb(philin);     //consists of four pts 

//Next we check uniqueness of the 12-ic for these lines
ideal I=intersect(L1^2,L2^3,L3^3,L4^3,L5^3,L6^3,L7^3);
I=std(I);
writedegrees (I); 
poly D=I[1];      //Equation of the 12-ic

//And now we will check the singularities of N (lemma 5.3)
//First we want to see that all singularities lie on N ∩ E
//We do it by computing the jacobian ideal of D 
//and quotienting by the ideals of the lines
//with the corresponding multiplicity
ideal m=x,y,z,w;
ideal singD=jacob(D);
singD=std(singD);
singD=quotient(singD,L1); //These computations take some time
singD=quotient(singD,L1);  
singD=quotient(singD,L2); 
singD=quotient(singD,L2); 
singD=quotient(singD,L2); 
singD=sat(singD,m)[1];
singD=quotient(singD,L3^3);
singD=quotient(singD,L4^3);
singD=quotient(singD,L5^3);
singD=quotient(singD,L6^3);
singD=quotient(singD,L7^3);
singD; //Output: 1, so OK

//Now let us analyze singularities on the exceptional divisors
//Each E_i is isomorphic to P^1 x P^1, and E_i \cap N, 
//the tangent cone of N along l_i, 
//is given by a bihomogeneous polynomial of bidegree (12-m,m) 
//where m is the multiplicity  of D along l_i.
//We shall show that each of these tangent cones 
//is in fact smooth,
//so not only N->D is a  resolution of D, 
//it is an embedded resolution.
//By genericity of the lines and symmetry, 
//it is enough to do it for the double line 
//and one of the triple ones.
//By semicontinuity  of multiplicity, 
//it is enough to do it for these particular lines.
//The computation is especially simple 
//for the "coordinate" lines L1 and L2
poly D1=reduce(D,std(L1^3)); //The tangent cone to a double line
poly D2=reduce(D,std(L2^4)); //The tangent cone to a triple line

//We change the ordering to account 
//for bihomogeneous coordinates 
ring T=32000,(x,y,z,w),(dp(2),dp(2)); 
poly D1=fetch(R,D1); 
poly D2=fetch(R,D2);

ideal singD1=jacob(D1);
ideal m1=x,y;
ideal m2=z,w;
singD1=sat(singD1,m1)[1];
singD1=sat(singD1,m2)[1];
singD1; //Output: 1 so the surface is transverse to E1

ideal singD2=jacob(D2);
singD2=sat(singD2,m1)[1]; //Takes some time
singD2=sat(singD2,m2)[1];
singD2; //Output: 1 so the surface is transverse to E2

\end{verbatim}


\bigskip
   Marcin Dumnicki, Halszka Tutaj-Gasi\'nska,
   Jagiellonian University, Institute of Mathematics, {\L}ojasiewicza 6, PL-30-348 Krak\'ow, Poland

\nopagebreak
   \textit{E-mail address:} \texttt{Marcin.Dumnicki@im.uj.edu.pl}

   \textit{E-mail address:} \texttt{Halszka.Tutaj@im.uj.edu.pl}

\bigskip
   Brian Harbourne,
   Department of Mathematics, University of Nebraska-Lincoln, Lincoln, NE, 68588, USA

\nopagebreak
   \textit{E-mail address:} \texttt{bharbourne1@unl.edu}

\bigskip
   Joaquim Ro\'e,
   Barcelona Graduate School of Mathematics and
   Departament de Matem\`atiques, Facultat de Ci\`encies,  Universitat Aut\`onoma de Barcelona, 08193 Bellaterra (Barcelona), Spain.

\nopagebreak
   \textit{E-mail address:} \texttt{jroe@mat.uab.cat}

\bigskip
   Tomasz Szemberg,
   Department of Mathematics,
   Pedagogical University of Cracow,
   Podchor\c a\.zych 2,
   PL-30-084 Krak\'ow, Poland.

\nopagebreak
   \textit{E-mail address:} \texttt{tomasz.szemberg@gmail.com}



\begin{thebibliography}{99}\footnotesize
 \bibitem{AleHir95}
      Alexander, J., Hirschowitz, A.:
      Polynomial interpolation in several variables.
      J. Algebraic Geom. 4 (1995), 201--222
%
%

\bibitem{Boc05}
  Bocci, C.:
  Special effect varieties in higher dimension.
  Collect. Math. 56 (2005), 299--326

\bibitem{BDP14}
   Brambilla, M. C., Dumitrescu, O., Postinghel, E.:
   On a notion of speciality of linear systems in $\P^n$,
  Trans. Amer. Math. Soc. 367 (2015), 5447--5473

\bibitem{Cil00}
   Ciliberto, C.:
   Geometric aspects of polynomial interpolation in more variables and of Waring's problem.
   European Congress of Mathematics, Vol. I (Barcelona, 2000), 289--316, Progr. Math., 201, Birkh\"auser, Basel, 2001

\bibitem{CHMR13}
   Ciliberto, C., Harbourne, B., Miranda, R., Ro\'e, J.:
   Variations of Nagata's conjecture.
   A celebration of algebraic geometry, 185--203, Clay Math. Proc., 18, Amer. Math. Soc., Providence, RI, 2013

\bibitem{CilMir98}
   Ciliberto, C., Miranda, R.:
   Degenerations of planar linear systems.
   J. Reine Angew. Math. 501 (1998), 191--220

\bibitem{CHMN16}
Cook II, D., Harbourne, B., Migliore, J., Nagel, U.:
Line arrangements and configurations of points with an unusual geometric property.
Compositio Math. 154 (2018) 2150--2194

\bibitem{Singular}
Decker, W., Greuel, G.-M., Pfister, G., Sch{\"o}nemann, H.:
\newblock {\sc Singular} {4-1-1} --- {A} computer algebra system for polynomial computations.
http://www.singular.uni-kl.de, (2018).

\bibitem{D11}
   Dolgachev, I.:
   Lectures on Cremona transformations,
   pp.\ 121, available at http://www.math.lsa.umich.edu/~idolga/cremonalect.pdf.

\bibitem{DHST14}
   Dumnicki, M., Harbourne, B., Szemberg, T., Tutaj-Gasi\'nska, H.:
   Linear subspaces, symbolic powers and Nagata type conjectures,
   Adv. Math. 252 (2014), 471--491

\bibitem{DFST} Dumnicki, M., Fashami, M.Z., Szpond, J., Tutaj-Gasi\'nska, H.,
Lower bounds for Waldschmidt constants of generic lines in $\P^3$ and a Chudnovsky-type theorem, arXiv:1803.02387

\bibitem{Gi87} Gimigliano, A.:
On linear systems of plane curves.
Thesis, Queen's University, Kingston, (1987)

\bibitem{GHVT13}
   Guardo, E., Harbourne, B., Van Tuyl, A.:
   Asymptotic resurgences for ideals of positive dimensional subschemes of projective space,
   Adv. in Math. 246 (2013), 114--127

\bibitem{GHVTb13}
   Guardo, E., Harbourne, B., Van Tuyl, A.:
Symbolic powers versus regular powers of ideals
of general points in $\P1\times\P1$,
Canad. J. Math. 65 (2013), 823--842

\bibitem{Har10}
   Harbourne, B.:
   Global aspects of the geometry of surfaces.
   Ann. Univ. Pedagog. Crac. Stud. Math. 11 (2010), 5--41

\bibitem{Har84}
   Harbourne, B.:
The geometry of rational surfaces and Hilbert functions of
points in the plane.
Proccedings of the 1984 Vancouver Conference in Algebraic Geometry, CMS Conf.
Proc. 6 Amer. Math. Soc., Providence, RI, (1986) 95--111

\bibitem{HMNT18}
Harbourne, B., Migliore, J., Nagel, U., Teitler, Z.:
Unexpected hypersurfaces and where to find them,
preprint 2018, arXiv:1805.10626

\bibitem{Har92}
   Harris, J.:
   Algebraic Geometry. A first course,
   Springer New York 1992

\bibitem{HarHir81}
   Hartshorne, R., Hirschowitz, A.:
   Droites en position g\'en\'erale dans l'espace projectif.
   Algebraic geometry (La R\'abida, 1981), 169--188, Lecture Notes in Math., 961, Springer-Verlag 1982

\bibitem {Hirsh89} Hirschowitz, A.:
Une conjecture pour la cohomologie des diviseurs sur les surfaces rationelles g\'en\'eriques.
J. Reine Angew. Math. 397 (1989), 208--213

\bibitem{Iar97}
   Iarrobino, A.:
   Inverse system of a symbolic power. III. Thin algebras and fat points.
   Compositio Math. 108 (1997), no. 3, 319--356

\bibitem{LafUga05}
   Laface, A., Ugaglia, L.:
   A conjecture on special linear systems of $P^3$.
   Rend. Sem. Mat. Univ. Politec. Torino 63 (2005), 107--110

\bibitem{LafUga06}
   Laface, A., Ugaglia, L.:
   On a class of special linear systems of $P^3$.
   Trans. Amer. Math. Soc. 358 (2006), 5485--5500
   
   \bibitem{Lenarcik}
   Lenarcik, T.: Linear systems over $\mathbb{P}^1 \times \mathbb{P}^1$ with base points of  multiplicity bounded by three, Ann. Polon. Math.
 101.2 (2011), 105--122

\bibitem{Ran99}
   Ranestad, K.:
   A geometric construction of elliptic conic bundles in $\P^4$.
   Math. Z. 231 (1999), 771--781

\bibitem {Seg61} Segre, B.:
Alcune questioni su insiemi finiti di punti in geometria algebrica.
Atti Convegno Intern. di Geom. Alg. di Torino (1961), 15--33

\bibitem{Ser} Serre, J.-P.
Local algebra.
Translated from the French by CheeWhye Chin and revised by the author. Springer Monographs in Mathematics. Springer-Verlag, Berlin, 2000. xiv+128 pp. ISBN: 3-540-66641-9.

\bibitem{Szp18}
Szpond, J.:
Unexpected curves and Togliatti-type surfaces.
Preprint 2018 (arXiv: 1810.06607).

\bibitem{Tod32}
   Todd, J.:
   Configurations defined by six lines in space of three dimensions.
   Proc. Cambridge Phil. Soc. 29 (1932), 52--68

\bibitem{Vaz01}
   Vazzana, D.:
   Invariants and projections of six lines in projective space.
   Trans. Amer. Math. Soc. 353 (2001), 2673--2688


\bibitem{Wake}
Wakeford, E.K.:
Chords of twisted cubics.
Proc. London Math. Soc. (2) 21 (1923), 98--113


\bibitem{Wal77}
   Waldschmidt, M.: Propri\'et\'es arithm\'etiques de fonctions de
   plusieurs variables II,
   in: S\'eminaire P. Lelong (Analyse), 1975--1976, in: Lecture Notes in Math., vol.578, Springer-Verlag, 1977, pp.108--135

%
\end{thebibliography}
\end{document}